\newtheorem{theorem}{Theorem}
\newtheorem{prop}[theorem]{Proposition}
\newtheorem{defn}[theorem]{Definition}
\newtheorem{lemma}[theorem]{Lemma}
\newtheorem{prob}[theorem]{Problem}
\newtheorem{remark}[theorem]{Remark}
\newcommand{\C}{\Bbb C}
\newcommand{\D}{\Bbb D}
\newcommand{\G}{\Bbb G}
\newcommand{\N}{\Bbb N}
\newcommand{\Om}{\Omega}
\newcommand{\hol}{{\mathrm {Hol}}}
\newenvironment{proof*}{\vskip 2mm\noindent {}}{\hfill $\Box$ \vskip 2mm}
\def\O{\mathcal O}
\def\M{\mathcal M}
\def\ord{\operatorname{ord}}
\def\sp{\operatorname{Sp}}
\title[Lifting maps]{Lifting maps from the symmetrized polydisc in small dimensions}
\begin{document}

\author{Nikolai Nikolov}
\address{Institute of Mathematics and Informatics\\ Bulgarian Academy of
Sciences\\1113 Sofia, Bulgaria} \email{nik@math.bas.bg }

\author{Pascal J. Thomas}
\address{Universit\'e de Toulouse\\ UPS, INSA, UT1, UTM \\
Institut de Math\'e\-ma\-tiques de Toulouse\\
F-31062 Toulouse, France} \email{pascal.thomas@math.univ-toulouse.fr}

\author{Tran Duc-Anh}
\address{Department of Mathematics \\
Hanoi National University of Education\\
136 Xuan Thuy St., Hanoi, Vietnam} \email{ducanh@hnue.edu.vn}

\subjclass[2010]{30E05, 32F45}

\keywords{spectral ball, Nevanlinna-Pick, symmetrized polydisc, $\mu$-synthesis, Lempert function}

\begin{abstract}
The spectral unit ball $\Omega_n$ is the set of all $n\times n$ matrices $M$ with
spectral radius less than $1$. Let  $\pi(M) \in \C^n$
stand for the coefficients of the characteristic polynomial of
a matrix $M$ (up to signs),
i.e. the elementary symmetric functions of its eigenvalues.  The
symmetrized polydisc is $\mathbb G_n:=\pi(\Omega_n)$.

When investigating Nevanlinna-Pick problems for maps from the disk to
the spectral ball, it is often useful to project the map to the symmetrized polydisc
(for instance to obtain continuity results for the Lempert function): if $\Phi \in \hol (\D, \Omega_n)$, then $\pi \circ \Phi \in \hol (\D, \mathbb G_n)$.  Given a map $\varphi \in \hol(\D, \mathbb G_n)$,
we are looking for necessary and sufficient conditions for this map to ``lift
through given matrices", i.e. find $\Phi$ as above so that $\pi \circ \Phi = \varphi$
and $\Phi (\alpha_j) = A_j$, $1\le j \le N$.
A natural necessary condition is $\varphi(\alpha_j)=\pi(A_j)$, $1\le j \le N$.
When the matrices $A_j$ are derogatory (i.e. do not admit a cyclic vector)
new necessary conditions appear, involving derivatives of $\varphi$ at the points $\alpha_j$.
We prove that those conditions are necessary and sufficient for a local lifting.
We give a formula which performs the global lifting in small dimensions ($n \le 5$),
and a counter-example to show\textsl{•}
that the formula fails in dimensions $6$ and above.
\end{abstract}

\maketitle

\section{Motivation and statements}
\subsection{Definitions}
Some problems in Robust Control Theory lead to the study of
structured singular values of a matrix (denoted by $\mu$).  A special case of
this is simply the spectral radius. A very special instance of the ``$\mu$-synthesis"
problem reduces to a
Nevanlinna-Pick problem, i.e. given points $\alpha_j \in \D
:=\{z\in \C: |z|<1\}$, $A_j \in \Omega \subset \C^m$,
$1\le j \le N$, determine whether there exists $\Phi$ holomorphic from $\D$ to  $\Omega $
such that $\Phi(\alpha_j) = A_j$, $1\le j \le N$. We refer the interested
reader to Nicholas Young's stimulating survey \cite{Yo}.

We study this special case.  Let us
set some notation.

Let $\M_n$ be the set of all $n\times n$ complex matrices. For
$A\in\M_n$ denote by $\sp(A)$ and $r(A)=\max_{\lambda\in
\sp(A)}|\lambda|$ the spectrum and the spectral radius of $A$,
respectively.

\begin{defn}
\label{balldisk}
The \emph{spectral ball} $\Om_n$ is given as
$$
\Om_n:=\{A\in\M_n:r(A)<1\}.
$$

The \emph{symmetrized
polydisc} $\G_n$ is defined by
$$\G_n:=\{\pi(A):A\in\Om_n\},$$
where the mapping $\pi : \mathcal M_n \longrightarrow \C^n$, $\pi=(\sigma_1,\dots,\sigma_n)$,
is given,
up to alternating signs,
by the coefficients of the characteristic polynomial of the matrix:
$$
P_A (t):=\det(tI_n-A)=:\sum_{j=0}^n(-1)^j\sigma_j(A)t^{n-j}.
$$
In other words, the $k$-th coordinate of $\pi$,
$\sigma_k(A)$, is the $k$-th elementary symmetric function
of the eigenvalues of $A$.
\end{defn}

\begin{prob} (The Lifting Problem).

\label{liftprob}
Given a map $\varphi\in \hol(\D,\G_n)$  and $A_1, \dots, A_N \in \Omega_n$,
find conditions (necessary, or sufficient)
such that there
exists a $\Phi\in\hol(\D,\Om_n)$ satisfying $\varphi=\pi\circ\Phi$
and $\Phi(\alpha_j)=A_j$ for $j=1,\dots,N$.
\end{prob}

When this happens, we say that the map
$\varphi$ \emph{lifts} through
the matrices $A_1, \dots, A_N$ at $(\alpha_1, \dots, \alpha_N)$.
An obvious necessary condition for $\varphi$ to lift through
the matrices $A_1, \dots, A_N$ at $(\alpha_1, \dots, \alpha_N)$ is that
$\varphi(\alpha_j)=\pi(A_j)$ for $j=1,\dots,N$.
\vskip.3cm

\begin{remark}
\label{similar}
Whenever there is a solution
to the Lifting Problem for
$\alpha_j,A_j$, then there is one for $\alpha_j,\tilde A_j$, when $A_j \sim \tilde A_j$ for each $j$,
i.e.
$A_j$ is similar to $\tilde A_j$, i.e.
for each $j$ there exists $P_j \in \mathcal M_n^{-1}$
such that
$\tilde A_j=P_j^{-1} A_j P_j$ \cite[proof of Theorem 2.1]{Agl-You1}.
\end{remark}

Our first result is a local answer (Proposition \ref{localnasc} in Section \ref{NC}).
Using this and Forstneri\v c theory, Andrist \cite{An} recently gave a proof that the local conditions
are indeed sufficient for a global lifting.
However, we provide
an explicit formula to perform the lifting when $n\le 5$ (Theorem \ref{lift4} in Section \ref{N4}).
The method developed in Section \ref{GML}
works in a number of other cases (for instances when each of the matrices
to interpolate has a single eigenvalue), but fails in general
for dimensions greater or equal to $6$, as is shown in Section \ref{cex}.

\subsection{Motivations}

Lifting maps reduces the study of Nevanlinna-Pick
interpolation into the spectral ball to a problem with a target domain of much smaller
dimension, and bounded. The symmetrized polydisc is \emph{taut},
i.e. any family of maps into it is a normal family. So
in particular, if the conditions for lifting are continuous in $\varphi$
(for instance depending on a finite number of values of $\varphi$ and its
derivatives), we can derive continuity results.
In order to formalize this, we use the following notation.

\begin{defn}
Given a map $\varphi: \D \longrightarrow \C^n$ and $\alpha \in \D$, the \emph{$k$-jet}
of $\varphi$ at $\alpha$ is defined as $ \mathcal J^k_\alpha (\varphi):=
\left( \varphi (\alpha), \varphi' (\alpha),  \dots, \varphi^{(k)} (\alpha) \right) \in \C^{(k+1)n}$.
\end{defn}

The map $\varphi \mapsto \mathcal J^k_\alpha (\varphi)$ is linear and continuous
from $\hol (\D,\C^n)$, endowed with the topology of uniform convergence on compacta,
to $\C^{(k+1)n}$.

We give an instance of a continuity result when $N=2$. Recall the definition of the Lempert function
in this context.

\begin{defn}
\label{lempfunc}
Let $A,B \in \Omega \subset \C^m$, the \emph{Lempert function} is
$$
\ell_{\Omega} (A,B):= \inf \left\lbrace |\alpha|: \exists \varphi \in \hol(\D,\Omega):
\varphi(\alpha)=A, \varphi(0)= B \right\rbrace.
$$
\end{defn}

\begin{defn}
We say that a matrix $A \in \mathcal M_n (\C)$ is \emph{cyclic} if it admits a cyclic
vector $v$, i.e. $v\in \C^n$ such that its iterates $\{ A^k x,  k\ge 0\}$ span the whole
space $\C^n$.
\end{defn}

It is well-known that the Lempert function is continuous in both arguments
at points $(A,B)$ where both $A$ and $B$ are cyclic. The situation is not as clear
when one of the matrices fails to be cyclic (such matrices are called \emph{derogatory}).

The following proposition is implicit in \cite[Section 4]{Tho-Tra}.

\begin{prop}
\label{thetacond}
Let $A, B \in \mathcal M_n (\C)$, $\alpha \in \D \setminus \{0\}$.

Suppose that $A$ is a cyclic matrix.

Suppose that there exist integers $k, p \ge 0$ and
 a linear map 
 \begin{eqnarray*}\Theta_B : \C^{(k+1)n} &\longrightarrow &\C^n \times \C^p \\
 V= (V_0, \dots, V_k) & \mapsto &\left( V_0, \tilde \Theta_B (V) \right) 
 \end{eqnarray*}
 such that: a map $\varphi \in \hol (\D)$ admits a lifting $\Phi$ through $(A,B)$
 at $(\alpha,0)$, with $\Phi(\zeta)$ cyclic for $\zeta \neq 0$,
  if and only if $\varphi(\alpha) = \pi (A)$ and
 $\Theta_B (\mathcal J^k_0 (\varphi)) = \left(\pi (B), 0 \right)$.

Then the map
 $M \mapsto \ell_{\Omega_n} (M,B)$ is continuous at the point $A$.
\end{prop}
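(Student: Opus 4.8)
The plan is to show that $\ell_{\Omega_n}(\cdot, B)$ is upper semicontinuous at $A$ directly from the definition, and lower semicontinuous at $A$ by exploiting the tautness of $\G_n$ together with the hypothesis on $\Theta_B$. For upper semicontinuity: given $\eps > 0$, pick an extremal or near-extremal lift $\Phi \in \hol(\D, \Omega_n)$ with $\Phi(\alpha) = A$, $\Phi(0) = B$, and $|\alpha| < \ell_{\Omega_n}(A,B) + \eps$. For $M$ close to $A$ we want a competitor. Since $A$ is cyclic, after a similarity (which is harmless by Remark \ref{similar}) we may assume $A$ is in a nice normal form; then for $M$ near $A$ we can write $M = Q(A + E)Q^{-1}$ with $E$ small, or better, use that cyclic matrices form an open set and that one can perturb $\Phi$ near $\zeta = \alpha$ to hit $M$ instead of $A$ while staying in $\Omega_n$ and keeping the value at $0$ fixed; concretely, compose with an automorphism of $\D$ and add a small holomorphic correction vanishing at $0$. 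This gives $\ell_{\Omega_n}(M,B) \le |\alpha| + o(1)$, hence $\limsup_{M\to A}\ell_{\Omega_n}(M,B) \le \ell_{\Omega_n}(A,B)$.

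The substance is lower semicontinuity. Suppose it fails: there is a sequence $M_j \to A$ with $\ell_{\Omega_n}(M_j, B) \to L < \ell_{\Omega_n}(A,B)$. Choose lifts $\Phi_j \in \hol(\D, \Omega_n)$ with $\Phi_j(\beta_j) = M_j$, $\Phi_j(0) = B$, $|\beta_j| \to L$ (WLOG $\beta_j \to \beta$, $|\beta| = L$). Projecting, $\varphi_j := \pi \circ \Phi_j \in \hol(\D, \G_n)$, and $\G_n$ is taut, so after passing to a subsequence $\varphi_j \to \varphi$ locally uniformly with $\varphi \in \hol(\D, \G_n)$ (the limit cannot escape to the boundary because $\varphi_j(0) = \pi(B)$ is fixed). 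Now $\varphi(\beta) = \lim \varphi_j(\beta_j) = \lim \pi(M_j) = \pi(A)$ and $\varphi(0) = \pi(B)$. Here I would invoke the hypothesis: because each $\Phi_j$ is a lifting through $(M_j, B)$ at $(\beta_j, 0)$ — and for $j$ large $M_j$ is cyclic, so after a harmless modification $\Phi_j(\zeta)$ is cyclic for $\zeta \ne 0$, or one argues this holds generically — the jet conditions $\Theta_B(\mathcal J^k_0(\varphi_j)) = (\pi(B), 0)$ hold. Since $\mathcal J^k_0$ is continuous and $\Theta_B$ is linear (hence continuous), passing to the limit gives $\Theta_B(\mathcal J^k_0(\varphi)) = (\pi(B), 0)$.

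To conclude, I need to upgrade this into a lift of $\varphi$ through $(A, B)$ at $(\beta, 0)$, but there is a mismatch: the hypothesis concerns interpolation at the fixed pair $(\alpha, 0)$, whereas the limiting interpolation happens at $(\beta, 0)$. The clean way around this is to reparametrize at the source level before taking limits: replace $\Phi_j$ by $\Phi_j \circ \psi_j$ where $\psi_j$ is the disk automorphism sending $\alpha$ to $\beta_j$ (so $\psi_j(\alpha) = \beta_j$) and arranged so that $\psi_j(0)$ converges to some fixed point — actually, one must be careful since we also want the value at $0$ preserved; the correct normalization is to compose with automorphisms sending the interpolation nodes of $\varphi_j$ to $\{\alpha, 0\}$, using that $|\beta_j| \to L < 1$ stays in a compact subset of $\D$, so the relevant automorphisms converge. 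After this reparametrization the limit map $\tilde\varphi$ satisfies $\tilde\varphi(\alpha) = \pi(A)$ and $\Theta_B(\mathcal J^k_0(\tilde\varphi)) = (\pi(B), 0)$, so by the hypothesis $\tilde\varphi$ lifts through $(A,B)$ at $(\alpha, 0)$; undoing the reparametrization yields a lift of (a reparametrization of) $\varphi$ through $(A,B)$ whose source-disk node for $A$ has modulus $L < \ell_{\Omega_n}(A,B)$, contradicting the definition of the Lempert function.

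The main obstacle I anticipate is precisely this bookkeeping of the interpolation nodes under the limit — ensuring that the tautness-limit $\varphi$ (or a controlled reparametrization of it) is genuinely in the scope of the $\Theta_B$ hypothesis, i.e. that the node for $A$ can be normalized to the fixed point $\alpha$ while $0$ stays fixed — and the cyclicity side-condition "$\Phi(\zeta)$ cyclic for $\zeta \ne 0$'' in the hypothesis: one must check this is not lost in the limit, which should follow because cyclicity is an open condition and can be repaired on the $\Phi_j$ level by an arbitrarily small perturbation preserving the two interpolation values (adding a small generic holomorphic matrix-valued function vanishing at the two nodes). Everything else is soft: tautness of $\G_n$, continuity of the jet map, and linearity of $\Theta_B$ do the real work.
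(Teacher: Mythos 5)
Your overall strategy is the one the paper uses: upper semicontinuity is standard, and for the other inequality you take near-extremal disks for $M_j\to A$, project to $\G_n$, extract a limit by tautness (anchored by $\varphi_j(0)=\pi(B)$), and pass the jet conditions to the limit using continuity of $\mathcal J^k_0$ and linearity of $\Theta_B$. Up to that point the proposal matches the paper's proof. But the step you yourself flag as the main obstacle — transferring the interpolation node from $\beta$ to the fixed $\alpha$ — contains a genuine gap, and your proposed fix cannot work: an automorphism of $\D$ preserves the pseudohyperbolic distance, so one mapping $\{0,\alpha\}$ onto $\{0,\beta_j\}$ with $0\mapsto 0$ exists only when $|\beta_j|=|\alpha|$ (it must be a rotation), and if instead you allow $0$ to move, the node for $B$ is no longer at $0$ and the hypothesis, which is a jet condition specifically at $0$, no longer applies. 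So the reparametrization route is closed.

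The paper resolves the mismatch without moving any nodes, by exploiting the cyclicity of $A$ at the very end. From $\Theta_B(\mathcal J^k_0(\varphi_\infty))=(\pi(B),0)$ one obtains a lifting $\Phi_\infty$ of $\varphi_\infty$ with $\Phi_\infty(0)=B$ and $\Phi_\infty(\zeta)$ cyclic for $\zeta\neq 0$ (the point-$\alpha$ part of the hypothesis plays no role here: for a cyclic target the only condition is the value of $\varphi$, which can be arranged at any point). Then $\pi\bigl(\Phi_\infty(\alpha_\infty)\bigr)=\varphi_\infty(\alpha_\infty)=\lim_j\pi(M_j)=\pi(A)$, and since $\Phi_\infty(\alpha_\infty)$ and $A$ are both cyclic with the same characteristic polynomial they are similar; Remark \ref{similar} then upgrades the lifting to one with $\Phi_\infty(\alpha_\infty)=A$ exactly, giving $\ell_{\Omega_n}(A,B)\le|\alpha_\infty|=L$, the desired contradiction. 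You should replace your automorphism argument by this one. A secondary loose end in your write-up is the claim that each $\Phi_j$ can be perturbed to be cyclic-valued off the nodes so as to fall within the scope of the stated equivalence; the clean way out is that the conditions encoded by $\tilde\Theta_B$ are necessary for \emph{any} lifting with value $B$ at $0$ (this is Proposition \ref{localnasc}, (a)$\Rightarrow$(b)), so no repair of the $\Phi_j$ is needed.
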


In less technical terms, if $A$ is cyclic, and if $B$ is such that
the existence of
a lifting of a map $\varphi$ from the symmetrized polydisc through $(A,B)$, 
cyclic-valued except for $B$,  is characterized by a finite number of conditions on the values
of $\varphi$ and its derivatives (at its relevant points),
then we have
partial continuity of the Lempert function with respect to the first argument at $(A,B)$.

Since Andrist's work \cite{An} (and, in a special case, our Theorem \ref{lift4})
provides a set of conditions as called for in Proposition \ref{thetacond},
we now know that the conclusion holds for any matrix $B \in \mathcal M_n$ 
and any dimension $n$: if $A$ is cyclic,
the map
 $M \mapsto \ell_{\Omega_n} (M,B)$ is continuous at the point $A$.

\begin{proof*}{\it Proof of Proposition \ref{thetacond}.}

The Lempert function is always upper semicontinuous, so we only need to show:
$$
\mbox{when } A_p \to A, \quad\ell_{\Omega_n}(A,B) \le \limsup_p \ell_{\Omega_n}(A_p,B).
$$
Passing to a subsequence if needed,
we may choose $\alpha_p \in \D$ such that $|\alpha_p| \ge \ell_{\Omega_n}(A_p,B)$
and $\lim_{p\to\infty} |\alpha_p| = \left| \alpha_\infty \right| 
= \limsup_{p\to\infty} \ell_{\Omega_n}(A_p,B)$,
with $\alpha_\infty \in \overline \D$.
Then there exist $\Phi_p \in \hol(\D,\Omega_n )$ such that
$\Phi_p (0) = B$, $\Phi_p (\alpha_p)= A_p$.

Let $\varphi_p := \pi \circ \Phi_p$.
Because $\G_n$ is taut, passing to a subsequence if needed, we may assume that
$\varphi_p \to \varphi_\infty \in \hol (\D,\G_n)$.  Clearly $\varphi_\infty (0)= \pi (B)$.

By continuity of the jet map, $(\pi(B),0) = \Theta_B (\mathcal J^k_0(\varphi_p))
=\Theta_B (\mathcal J^k_0(\varphi_\infty))$,
so there exists $\Phi_\infty$ such that $\pi \circ \Phi_\infty =  \varphi_\infty$,
$\Phi_\infty(0)=B $.

Furthermore, $\pi (\Phi_\infty(\alpha_\infty))=\varphi_\infty (\alpha_\infty)$
$= \lim_p \varphi_p (\alpha_p) = \lim_p \pi(A_p) = \pi(A)$, and since $\Phi_\infty(\alpha_\infty)$
and $A$ have the same spectrum and are cyclic, $\Phi_\infty(\alpha_\infty) \sim A$.
\end{proof*}

\section{First reductions of the problem}
\label{firstred}

We need to establish some notations.

\begin{defn}
\label{asspol}
Given a vector $v:=(v_1, \dots, v_n) \in \C^{n}$, we denote
$P_{[v]}(t):= t^n + \sum_{j=1}^n(-1)^j v_j t^{n-j}$.
\end{defn}
This choice ensures that $P_{[\pi(A)]} = P_A$.

\begin{defn}
Given $a:=(a_1, \dots, a_n) \in \C^n$, the \emph{companion matrix}
of $a$ is
$$
C_{[a]} :=
\left(
\begin{array}{ccccc}
0 & 1 & 0 & \cdots & 0 \\
0 & 0 & \ddots & & \vdots \\
\vdots & & \ddots & 1 & 0 \\
0 & 0 & \cdots & 0 & 1 \\
a_n & a_{n-1} & \cdots & a_2 & a_1
\end{array}
\right) .
$$
\end{defn}
We see that its characteristic polynomial is then
$\det (tI_n - C_{[a]})= t^n - \sum_{j=1}^n  a_j t^{n-j} $,
so that $\sigma_j(C_{[a]}) = (-1)^{j+1} a_j$, for $1\le j \le n$.

Given a matrix $M$, the \emph{companion matrix} of $M$, denoted $C_M$, is the unique matrix in companion
form with the same characteristic polynomial as $M$.

A matrix $A\in\M_n$ is cyclic (i.e. non-derogatory) if and only if it is conjugate
to its companion matrix (for this and other equivalent properties see for instance \cite{NTZ}).

The above computation of the characteristic polynomial of a companion matrix
shows that $\varphi \in \hol(\D, \G_n)$, if we write
$\tilde \varphi := ((-1)^{j+1} \varphi_j, 1\le j \le n)$,
then the map given by $\Phi(\zeta) := C_{[\tilde \varphi (\zeta)]}$ is a lifting of $\varphi$.

Therefore, in view
of Remark \ref{similar}, this means that lifting through a set of cyclic matrices
can be achieved as soon as the obvious necessary conditions $\varphi (\alpha_j)=\pi(A_j)$,
$1 \le j \le N$, are satisfied \cite[Theorem 2.1]{Agl-You1},
\cite[Theorem 2.1]{Cos}.

The case where $A_1$ has only one eigenvalue, and $A_2, \dots, A_N$ are cyclic, has been
studied in \cite{Tho-Tra}.

\section{Necessary conditions}
\label{NC}

Let $A_1 \in \mathcal M_n $. Up to conjugacy, we may assume that it is in Jordan form.
Write this
in blocks associated to each of the \emph{distinct} eigenvalues of $A_1$,
denoted $\lambda_k, 1 \le k \le s$ where $s\le n$.  Namely
\begin{equation}
\label{jf}
A_1 = \left(
\begin{array}{ccc}
B_1 &  &  \\
 & \ddots & \\
 & & B_s
\end{array}
\right) ,  \quad B_k \in \mathcal M_{m_k}, \quad \sum_{k=1}^s m_k =n,
\end{equation}
where $\mbox{Sp }B_k=\{\lambda_k\}$, and  $\lambda_j \neq \lambda_k$ for $k\neq j$.

Temporarily, we fix $k$ and write $(B,\lambda,m)$ instead of $(B_k,\lambda_k,m_k)$.
We need to set up some notation as in  \cite{Tho-Tra}.
Let $B=(b_{i,j})_{1\le i,j\le m}$. Then $b_{jj}=\lambda$,
$b_{j-1,j} \in \{0,1\}$, $2\le j \le m$, and $b_{ij}=0$ if either $i>j$ or $i+1<j$.

Let $r$ stand for the rank of $B- \lambda I_m$, so there are exactly
exactly $m-r$ columns in  $B- \lambda I_m$ which are identically zero,
the first, and the ones
indexed by the integers $j \ge 2$ such that $b_{j-1,j}=0$.
Enumerate the (possibly empty) set of column indices where
the coefficient  $ b_{j-1,j}$ vanishes as
$$
\{ j : b_{j-1,j}=0 \} =: \{b_2, \dots, b_{m-r}\},  2\le b_2 < \dots < b_{m-r} \le m.
$$
Equivalently, $b_{l+1}-b_l$ is the size of the Jordan block $B^{(l)}:= (b_{ij})_{b_l\le i,j \le  b_{l+1}-1}$.
The integer $b_{l+1}-b_l$ is also the order of nilpotence of the block $B^{(l)}$.

We choose the Jordan form so that $b_{l+1}-b_l$ is increasing for $1\le l \le m-r$,
with the convention $b_{m-r+1}:=m+1$.  It means that the possible zeroes appear
for the smallest possible indices $j$, globally.

\begin{defn}
\label{di}
For $1\le i \le m$,
$$
d_i(B)= d_i:= 1+ \# \{ k : m-i+2 \le b_k \le m \}.
$$
\end{defn}

Equivalently, $d_i-1$ is the number of columns which are identically zero, among the
last $i-1$ columns of $A_1$, or 
$$
d_i = 1+(m-r) - \max \{ j : b_j \le m-i+1\},
$$
with the agreement that the maximum equals $0$ if the set on the right hand side is empty.

One can also
interpret $d_j = d_j(B)$ as the least integer $d$ such that there is a set $S$ of
$d$ vectors in $\C^n$ with the property that the the iterates of $S$ by $B$ span
a subspace of $\C^n$ of dimension at least $j$
(we shall not need this characterization, so we do not include a proof).

Notice that $B$ is cyclic if and only if $d_i(B)=1$, for any $i$ ($b_{j-1,j}=1$
for any $j$); while it is scalar if and only if $d_i(B)=i$, for any $i$ ($b_{j-1,j}=0$
for any $j$).

The following proposition gives a set of conditions for lifting which are
locally necessary and sufficient.  This says in particular that all possible
necessary conditions that can be obtained from the behavior of $\Phi$ in a
neighborhood of $\alpha \in \D$ are exhausted by \eqref{neccond}.
\begin{prop}
\label{localnasc}
Let $\varphi \in Hol(\omega,\G_n)$, where $\omega$ is a neighborhood of $\alpha \in \D$.
Let $A_1$ be as in \eqref{jf}.  Then the following assertions are equivalent:
\begin{enumerate}[(a)]
\item
There exists $\omega' \subset \D$ a neighborhood of $\alpha$ and
$\Phi \in \hol(\omega',\Omega_n)$ such that
$$\pi \circ \Phi =  \varphi, \Phi(0)=A_1 ;
$$
\item
The map $\varphi$ verifies
\begin{equation}
\label{neccond}
\frac{d^{k}P_{[\varphi (\zeta)]}}{dt^{k}} (\lambda_j)
 = O((\zeta-\alpha)^{d_{m_j-k}(B_j)}), \quad 0 \le k \le m_j-1, 1 \le j \le s,
\end{equation}
where the $d_i$ are as in Definition \ref{di}.
\item
There exists $\omega' \subset \D$ a neighborhood of $\alpha$ and
$\Phi \in Hol(\omega',\Omega_n)$ such that
$$\pi \circ \Phi =  \varphi, \Phi(0)=A_1 \mbox{ and }
\Phi(\zeta)\mbox{ is cyclic for }\zeta\in \omega' \setminus \{ \alpha\}.
$$
\end{enumerate}
\end{prop}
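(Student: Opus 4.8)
The plan is to prove the cyclic chain of implications (c)$\Rightarrow$(a)$\Rightarrow$(b)$\Rightarrow$(c). Since (c) is just (a) with the extra demand that $\Phi$ be cyclic-valued off $\alpha$, the implication (c)$\Rightarrow$(a) is trivial, and the content is in (a)$\Rightarrow$(b) and (b)$\Rightarrow$(c). Both of these I would first reduce to the case $s=1$ (a single eigenvalue) by \emph{holomorphic spectral projections}. For a small enough disc $\omega'\ni\alpha$, the eigenvalues of $\Phi(\zeta)$ near $\lambda_j$ stay inside a fixed small circle $\gamma_j$ separating them from the others, so the Riesz projection $\Pi_j(\zeta)=\frac{1}{2\pi i}\oint_{\gamma_j}(wI_n-\Phi(\zeta))^{-1}\,dw$ is holomorphic in $\zeta$ of constant rank $m_j$; over the simply connected $\omega'$ the subbundle $\zeta\mapsto\operatorname{ran}\Pi_j(\zeta)$ is holomorphically trivial, so there is $U\in\hol(\omega',\M_n)$ with $U(\zeta)$ invertible and, since $A_1$ is already block-diagonal as in \eqref{jf}, with $U(\alpha)=I_n$, such that $U(\zeta)^{-1}\Phi(\zeta)U(\zeta)=\diag(\Phi_1(\zeta),\dots,\Phi_s(\zeta))$ and $\Phi_j(\alpha)=B_j$. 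Then $P_{[\varphi(\zeta)]}(t)=\prod_{j=1}^s Q_j(\zeta,t)$ with $Q_j(\zeta,t):=\det(tI_{m_j}-\Phi_j(\zeta))$, $Q_j(\alpha,t)=(t-\lambda_j)^{m_j}$. As the $\lambda_j$ are distinct, near $\alpha$ the factors $Q_j(\zeta,\cdot)$ have pairwise disjoint zero sets, so $\Phi(\zeta)$ is cyclic iff every $\Phi_j(\zeta)$ is, and near $t=\lambda_j$ the factor $\prod_{i\ne j}Q_i(\zeta,t)$ is a holomorphic unit; writing $t$-Taylor expansions at $\lambda_j$ and using that $i\mapsto d_i(B_j)$ is non-decreasing, one checks that \eqref{neccond} at the index $j$ is equivalent to the same condition for $Q_j$ alone. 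Conversely, a diagonal assembly of single-eigenvalue solutions is a solution, and $\Phi(\zeta)\in\Omega_n$ holds after shrinking $\omega'$ since $r$ is continuous and $r(A_1)<1$. So both implications reduce to the one-eigenvalue statement, for which the relevant combinatorial fact is that $d_q(B)$ is the least $d$ with $n_p+n_{p-1}+\cdots+n_{p-d+1}\ge q$, where $n_1\le\cdots\le n_p$ are the sizes of the Jordan blocks of $B$.

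For (a)$\Rightarrow$(b) in the one-eigenvalue case, I would write $N(\zeta):=\Phi(\zeta)-\lambda I_m$, so the coefficient of $(t-\lambda)^{m-q}$ in $\det(tI_m-\Phi(\zeta))$ is $(-1)^q$ times the sum of the $q\times q$ principal minors of $N(\zeta)$. For $S\subset\{1,\dots,m\}$, $|S|=q$, the minor $\det N(\zeta)_S$ is holomorphic, and if $N(\alpha)_S=(B-\lambda I_m)_S$ has rank $\rho$ then, performing constant row operations to make $q-\rho$ of the rows $O(\zeta-\alpha)$, every term of the determinant expansion carries $q-\rho$ such factors, so $\operatorname{ord}_\alpha\det N(\zeta)_S\ge q-\operatorname{rank}(B-\lambda I_m)_S$. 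A short combinatorial argument on the nilpotent Jordan matrix $B-\lambda I_m$ — the maximal rank of a $q\times q$ principal submatrix is attained on the top parts of the $d_q(B)$ largest blocks and equals $q-d_q(B)$ — then gives $\operatorname{ord}_\alpha\det N(\zeta)_S\ge d_q(B)$ for every such $S$, hence $\operatorname{ord}_\alpha\sigma_q(N(\zeta))\ge d_q(B)$, which is precisely \eqref{neccond}. This is in essence the computation of \cite{Tho-Tra}.

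For (b)$\Rightarrow$(c) in the one-eigenvalue case one must, conversely, construct $\Phi\in\hol(\omega',\M_m)$ with $\Phi(\alpha)=B$, prescribed characteristic polynomial $Q(\zeta,t)$ (monic of degree $m$ in $t$, $Q(\alpha,t)=(t-\lambda)^m$, coefficient of $(t-\lambda)^{m-q}$ of order $\ge d_q(B)$ at $\alpha$), and $\Phi(\zeta)$ cyclic for $0<|\zeta-\alpha|$ small. The naive idea of taking a block-diagonal matrix of companion matrices of a factorization $Q=\prod_l Q_l$ of degrees $n_l$ does \emph{not} work: already for $m=2$, $B=\lambda I_2$, the polynomial $Q(\zeta,t)=(t-\lambda)^2-(\zeta-\alpha)^3$ satisfies all the conditions but is irreducible over the ring of holomorphic germs at $\alpha$, so admits no such factorization — yet $\Phi(\zeta)=\begin{pmatrix}\lambda & \zeta-\alpha\\ (\zeta-\alpha)^2 & \lambda\end{pmatrix}$ solves the problem. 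I would therefore build $\Phi$ directly, by induction on the number of Jordan blocks of $B$, in a near-companion ``staircase'' shape in which the derogatory structure of $B$ supplies exactly the free off-diagonal slots needed to realize an arbitrary $Q$ after clearing denominators, while inserting a factor $\zeta-\alpha$ in a suitable extra slot forces $\Phi(\zeta)$ to be cyclic for $\zeta\ne\alpha$. I expect this construction to be the main obstacle: one has to verify that the pattern of required vanishing orders $d_q(B)$ is not only necessary but matches, term by term, the denominators produced when solving $\det(tI_m-\Phi)=Q$ in this normal form, so that all entries come out holomorphic, and that the perturbation can be chosen generic enough to guarantee cyclicity off $\alpha$ (which, for a derogatory $B$ with several blocks, is genuinely stronger than non-scalarness). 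The remaining steps — reassembling the blocks and returning to $\varphi$ via the reduction of the first paragraph — are then routine.
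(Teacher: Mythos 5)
Your overall architecture coincides with the paper's: both proofs reduce to the single-eigenvalue case by a holomorphic splitting of $\C^n$ adapted to the spectrum (your Riesz projections are exactly what \cite[Lemma 3.1]{TTZ} supplies), and both transfer the vanishing conditions \eqref{neccond} between $P_{[\varphi(\zeta)]}$ and the coprime factor attached to $\lambda_j$ by using that the complementary factor is a unit near $t=\lambda_j$ together with the monotonicity of $i\mapsto d_i(B_j)$ (this is Lemma \ref{prodpol} in the paper). Your minor computation for (a)$\Rightarrow$(b) in the nilpotent case is likewise the computation behind \cite[Corollary 4.3]{Tho-Tra}, which the paper simply quotes. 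One small point on the reduction: for (b)$\Rightarrow$(c) there is as yet no $\Phi$ to project, so the coprime factorization of $P_{[\varphi(\zeta)]}$ near $\alpha$ must be produced directly --- e.g.\ by applying your Riesz argument to the companion matrix $C_{[\tilde\varphi(\zeta)]}$, or by Hensel/Weierstrass; this is what the ``remarks following'' \cite[Lemma 3.1]{TTZ} are invoked for.

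The genuine gap is the one you flag yourself: (b)$\Rightarrow$(c) for a single-eigenvalue derogatory $B$ is announced, not proved. All the real content of the sufficiency direction is concentrated there, and ``a near-companion staircase shape in which the derogatory structure supplies exactly the free slots needed'' is a program rather than an argument: one must exhibit the normal form, solve $\det(tI_m-\Phi(\zeta))=Q(\zeta,t)$ for its free entries, and verify that each entry is holomorphic at $\alpha$ precisely because the coefficient of $(t-\lambda)^{m-q}$ vanishes to order $d_q(B)$ --- that order-counting is the whole theorem. The paper closes this step by invoking the known single-eigenvalue lifting of \cite{Tho-Tra} (the converse of Lemma \ref{singlelambda}, cf.\ \cite[Proposition 4.1]{Tho-Tra}); its Proposition \ref{genform} together with Lemma \ref{lemma_ord}(a) shows how your intended construction is actually carried out (a last row of divided differences $\Delta^{\ell-1}P_{[\varphi(\zeta)]}$ divided by $\prod_{k>\ell}f_k$, the $f_k$ vanishing simply where $a'_{k-1,k}=0$), and Section \ref{cex} shows that such formulas are delicate: the same Ansatz fails globally for $n\ge 6$. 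Your counterexample to naive block-companion factorization and your choice of shape are exactly on target, but until the holomorphy of the cleared denominators is checked the sufficiency half remains unproved.
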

Notice that the condition
$$\frac{d^{k}P_{[\varphi (\zeta)]}}{dt^{k}} (\lambda_j)
 = O(\zeta-\alpha), \quad 0 \le k \le m_j-1, 1 \le j \le s,
$$
says exactly that $P_{[\varphi (\alpha)]}(t)= P_{A_1}(t)$, in other words, $\varphi(\alpha)=\pi(A_1)$,
which is the obvious necessary condition for the existence of a lifting; and is the only condition
that is needed when $A_1$ is cyclic, that is to say when $d_{m_j-k}(B_j)=1$
for all $j$ and $k$.

Clearly condition (c) implies condition (a), so we will only prove that (a) implies (b)
(necessary conditions) and that (b) implies (c) (sufficient conditions).

\begin{proof}
First, since this is a local result, it is no loss of generality to assume that $\alpha=0$.

\vskip.3cm
{\bf Necessary conditions.}

First consider the case where there is only one eigenvalue $\lambda_1$ for $A_1$,
i.e. $s=1$, and furthermore $\lambda_1=0$.  This is settled by \cite[Corollary 4.3]{Tho-Tra},
which can be restated as follows.
\begin{lemma}
\label{nilp}
If $ \varphi = (\varphi_1, \dots, \varphi_n)
=\pi \circ  \Phi  $ with $\Phi  \in \O(\D,\Omega_n)$,
$\Phi (0)=A_1$ as in \eqref{jf}, $\sp A_1=\{0\}$ then $\varphi_i(\zeta) =O(\zeta^{d_i})$,
where the $d_i$ are as in Definition \ref{di}.
\end{lemma}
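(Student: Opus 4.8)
The plan is to reduce the statement to a concrete estimate on the growth of the elementary symmetric functions of the eigenvalues of a holomorphic matrix-valued map that vanishes, at the origin, into a nilpotent Jordan form of the prescribed type. First I would observe that the conclusion is invariant under conjugation of $\Phi$ by a \emph{fixed} invertible matrix, so there is no loss in assuming $A_1$ is exactly the Jordan form normalized as in Section \ref{NC} (blocks of nondecreasing size, with the zero superdiagonal entries pushed to the smallest possible column indices). The combinatorial content of Definition \ref{di} is that $d_i-1$ counts the zero columns among the last $i-1$ columns of $A_1$; equivalently, it records how many Jordan blocks have "ended" by the time one reaches a given tail of the matrix. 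The goal is then $\ord_0 \varphi_i \ge d_i$ for each $i$.

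The key step is to exploit that $\Phi(0)$ is nilpotent, so that the eigenvalues $\lambda_1(\zeta), \dots, \lambda_n(\zeta)$ of $\Phi(\zeta)$ all tend to $0$ as $\zeta \to 0$; being (multivalued) algebroid functions, each admits a Puiseux expansion, and the relevant quantity is the minimal order of vanishing among products of $i$ of them, which controls $\ord_0 \sigma_i(\Phi(\zeta)) = \ord_0 \varphi_i(\zeta)$. I would organize the eigenvalues into groups, one group per Jordan block of $A_1$: a Jordan block of size $e$ forces the corresponding $e$ eigenvalues to leave $0$ at a rate no faster than that governed by the size of the block, because of the nilpotent structure — more precisely, the characteristic polynomial of that block's "deformation" is, after a suitable change of coordinates, a perturbation of $t^e$ whose roots vanish at rate at least $\zeta^{1/e}$ only in the worst case, but in fact the \emph{symmetric functions} of those roots vanish to higher order than naively expected. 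Summing the contributions of the blocks in the right order and matching against the definition of $d_i$ gives the bound. The honest way to run this is to cite \cite[Corollary 4.3]{Tho-Tra} directly, which is exactly this statement; so at the level of this paper the lemma is simply a restatement, and the "proof" consists in checking that the normalization of the Jordan form and the indexing convention for $d_i$ used here match those in \cite{Tho-Tra}.

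The main obstacle, were one to reprove it from scratch rather than quote \cite{Tho-Tra}, is the passage from per-block estimates to the global estimate for $\sigma_i$: one must show that the optimal way to pick $i$ eigenvalues so as to minimize the order of vanishing of their product is to exhaust the largest Jordan blocks first, and that the resulting count is precisely $d_i$. This requires a careful bookkeeping argument (essentially a greedy/rearrangement argument on the multiset of block sizes) together with the non-obvious input that within a single nilpotent Jordan block of size $e$, the product of any $j \le e$ of its eigenvalues vanishes to order at least $j/e \cdot e = j$... — no, rather, one needs the sharper fact that the $j$-th symmetric function of those $e$ eigenvalues vanishes to order at least $\lceil je/e \rceil$ is too crude; the correct per-block order is governed by the structure of the companion-type perturbation and is exactly what \cite[Corollary 4.3]{Tho-Tra} establishes. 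Since that reference is available, I would simply invoke it and move on.
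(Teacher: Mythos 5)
Your proposal matches the paper's treatment exactly: the paper offers no independent proof of Lemma~\ref{nilp} either, but states that it is ``settled by \cite[Corollary 4.3]{Tho-Tra}'' and merely restates that result in the present notation, which is precisely what you propose to do. Your speculative sketch of a from-scratch argument (Puiseux expansions, greedy selection over Jordan blocks) is not needed and is in places garbled, but since you correctly identify the citation as carrying the entire burden, the proposal is fine as it stands.
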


Notice that if we write $P^{(k)}_{[\varphi(\zeta)]}:= \frac{d^{k}P_{[\varphi (\zeta)]}(t)}{dt^{k}} $
(the derivative of the polynomial with respect to the indeterminate $t$, not to be confused
with derivatives with respect to the holomorphic variable $\zeta$), the conditions above can be
written as $P^{(k)}_{[\varphi(\zeta)]}(0) = O(\zeta^{d_{n-k}})$, $0\le k \le n-1$.

If  $\mbox{Sp }A_1=\{\lambda\}$,
then $\mbox{Sp }(A_1-\lambda I_n) =\{0\}$. One sees immediately that
$P_{A_1-\lambda I_n}(t)=P_{A_1}(t-\lambda)$, so that the necessary condition in
the more general case of a matrix with a single eigenvalue becomes:

\begin{lemma}
\label{singlelambda}
If $ \varphi = (\varphi_1, \dots, \varphi_n)
=\pi \circ  \Phi  $ with $\Phi  \in \hol(\D,\Omega_n)$,
$\Phi (0)=A_1$, $\sp A_1=\{\lambda\}$ then
$P^{(k)}_{[\varphi(\zeta)]}(\lambda) = O(\zeta^{d_{n-k}})$, $0\le k \le n-1$.
\end{lemma}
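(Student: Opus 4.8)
The plan is to reduce Lemma \ref{singlelambda} to the already-known nilpotent case, Lemma \ref{nilp}, by translating the (single) eigenvalue to the origin. Given $\Phi \in \hol(\D,\Omega_n)$ with $\Phi(0)=A_1$ and $\sp A_1 = \{\lambda\}$, the natural candidate is $\zeta \mapsto \Phi(\zeta)-\lambda I_n$, whose value at $0$ is the nilpotent matrix $A_1 - \lambda I_n$. The one subtlety is that this translated map need not take values in $\Omega_n$, since shifting the whole spectrum by $-\lambda$ may raise the spectral radius to nearly $2$. Because the conclusion is purely local, this costs nothing: by continuity of the spectral radius, $r\bigl(\Phi(\zeta)-\lambda I_n\bigr)$ is close to $r(A_1-\lambda I_n)=0$ for $\zeta$ near $0$, so there is $\rho\in(0,1)$ for which $\Psi(\zeta):=\Phi(\rho\zeta)-\lambda I_n$ defines an element of $\hol(\D,\Omega_n)$ with $\Psi(0)=A_1-\lambda I_n$.

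Next I would observe that $A_1-\lambda I_n$ has exactly the same Jordan block structure as $A_1$: subtracting $\lambda I_n$ only changes the diagonal entries from $\lambda$ to $0$ and leaves the super-diagonal pattern of \eqref{jf} (with $s=1$, $m_1=n$) intact. Hence $A_1-\lambda I_n$ is again in the normalized Jordan form used in Section \ref{NC}, and the integers $d_i = d_i(A_1)$ of Definition \ref{di} are unchanged. Applying Lemma \ref{nilp} to $\Psi$, in the equivalent form recorded immediately after its statement, gives $P^{(k)}_{[\psi(\zeta)]}(0) = O(\zeta^{d_{n-k}})$ for $0\le k\le n-1$, where $\psi := \pi\circ\Psi$.

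Finally I would transfer this back to $\varphi$ by means of the elementary identity $P_{M-\lambda I_n}(t) = P_M(t+\lambda)$. Since $P_{[\psi(\zeta)]} = P_{\Psi(\zeta)} = P_{\Phi(\rho\zeta)-\lambda I_n}$, we have $P_{[\psi(\zeta)]}(t) = P_{[\varphi(\rho\zeta)]}(t+\lambda)$; differentiating $k$ times in the indeterminate $t$ and setting $t=0$ yields $P^{(k)}_{[\psi(\zeta)]}(0) = P^{(k)}_{[\varphi(\rho\zeta)]}(\lambda)$. Thus $P^{(k)}_{[\varphi(\rho\zeta)]}(\lambda) = O(\zeta^{d_{n-k}})$, and absorbing the constant $\rho$ into the $O(\cdot)$ gives $P^{(k)}_{[\varphi(\zeta)]}(\lambda) = O(\zeta^{d_{n-k}})$, as claimed. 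I do not expect a genuine obstacle here: the argument is a change of variables, and the only points deserving a moment's care are the rescaling that keeps the translated map inside the spectral ball and the (immediate) fact that translation by $\lambda I_n$ preserves the Jordan data and hence the $d_i$.
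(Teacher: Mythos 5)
Your proof is correct and follows essentially the same route as the paper: translate the eigenvalue to $0$ via $M\mapsto M-\lambda I_n$, apply the nilpotent case (Lemma \ref{nilp}) in its restated form, and transfer back through the identity $P_{M-\lambda I_n}(t)=P_M(t+\lambda)$. Your extra care about rescaling by $\rho$ to keep the translated map inside $\Omega_n$ is a detail the paper glosses over (and your sign in the polynomial identity is the correct one), but these are refinements of the same argument, not a different one.
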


Now consider the general case.  To prove \eqref{neccond} for each $j$, without
loss of generality,
study the block $B_1$ associated to the eigenvalue $\lambda_1$.

\cite[Lemma 3.1]{TTZ} and the remarks following it
give a holomorphically varying factorization in some neighborhood $\omega$ of $0$ of the
characteristic polynomial of $\Phi(\zeta)$:
for $\zeta \in \omega$,
$P_{[\varphi (\zeta)]}(X) = P_\zeta^1 (X) P_\zeta^2 (X) $,
and a corresponding splitting of the space $\C^n$ in a varying direct sum
of subspsaces of dimensions $m_1$ and $n-m_1=m_2+\cdots+m_s$,
with maps $\Phi_1 \in \hol (\omega, \Omega_{m_1})$,
$\Phi_2 \in \hol (\omega, \Omega_{m_2+\cdots+m_s})$,
such that $P_\zeta^i (t)$ is the characteristic polynomial of $\Phi_i(\zeta)$,
$i=1,2$;
$P_0^1 (t)=P_{B_1}(t)=(t-\lambda_1)^{m_1}$; and
\newline
$P_0^2 (t)= (t-\lambda_2)^{m_2} \cdots (t-\lambda_s)^{m_s}$.

As before, we may consider $P_{[\varphi (\zeta)]}(t-\lambda_1)$ to reduce ourselves
to the case $\lambda_1=0$.  The proof of the necessity of \eqref{neccond} concludes
with the following lemma (applied to $k_j:= m_1-d_j(B_1)$).

\begin{lemma}
\label{prodpol}
Let $P_\zeta^0, P_\zeta^1, P_\zeta^2$ be polynomials depending holomorphically on $\zeta$,
$P_\zeta^i(t) = \sum_{j=0}^{m_i} a_j^i t^{j}$, $i=0,1,2$,
such that $P_\zeta^0(t)= P_\zeta^1(t) P_\zeta^2(t)$,
$P_0^1(t)= t^{m_1}$ and $a_{0}^2(0)=P_0^2(0)\neq 0$.

Let $(k_j, 0\le j \le m_1-1)$ be a decreasing sequence of positive integers.
Then $a_j^0 = O(\zeta^{k_j}), 0\le j \le m_1-1$ if and only if
$a_j^1 = O(\zeta^{k_j}), 0\le j \le m_1-1$.
\end{lemma}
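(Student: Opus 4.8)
The plan is to reduce the statement to a purely algebraic fact about power series, using the hypothesis $a_0^2(0)\neq 0$ to invert $P_\zeta^2$ in the ring of formal (or convergent) power series in $\zeta$ with polynomial coefficients in $t$. Since $a_0^2(0) = P_0^2(0) \neq 0$, the power series $a_0^2(\zeta)$ is a unit in $\C\{\zeta\}$, and more is true: $P_\zeta^2(t)$, viewed as a polynomial in $t$ of degree $m_2$ with coefficients in $\C\{\zeta\}$, has an inverse $modulo\ t^{m_1}$, i.e. there is a polynomial $Q_\zeta(t) = \sum_{j=0}^{m_1-1} q_j(\zeta) t^j$ with $q_j \in \C\{\zeta\}$ such that $P_\zeta^2(t) Q_\zeta(t) \equiv 1 \pmod{t^{m_1}}$; this is just the geometric-series inversion of $1 + (\text{higher-order terms in } t)/a_0^2(\zeta)$, and the key point is that all the denominators appearing are powers of $a_0^2(\zeta)$, hence units, so the $q_j$ are honest holomorphic functions near $0$.

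Next I would observe that the conclusion only involves the coefficients $a_j^1$ and $a_j^0$ for $0 \le j \le m_1 - 1$, i.e. the truncations of $P_\zeta^1$ and $P_\zeta^0$ modulo $t^{m_1}$. Write $[\,\cdot\,]_{<m_1}$ for truncation mod $t^{m_1}$. From $P_\zeta^0 = P_\zeta^1 P_\zeta^2$ we get $[P_\zeta^0]_{<m_1} = [P_\zeta^1 P_\zeta^2]_{<m_1}$, and multiplying by $Q_\zeta$ and truncating again, $[P_\zeta^1]_{<m_1} = [Q_\zeta P_\zeta^0]_{<m_1}$ as well (using $[Q_\zeta P_\zeta^2]_{<m_1} = 1$). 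So each of $(a_0^1,\dots,a_{m_1-1}^1)$ and $(a_0^0,\dots,a_{m_1-1}^0)$ is obtained from the other by multiplication by a lower-triangular (in the sense of convolution in $t$) matrix with entries that are holomorphic in $\zeta$ near $0$.

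It then remains to check that such a transformation respects the vanishing orders $a_j^\bullet = O(\zeta^{k_j})$, given that $(k_j)$ is \emph{decreasing}. Concretely, $a_j^1 = \sum_{i=0}^{j} q_{j-i}(\zeta)\, a_i^0$; if $a_i^0 = O(\zeta^{k_i})$ for all $i \le j$, then each term is $O(\zeta^{k_i})$ with $i \le j$, hence $O(\zeta^{k_j})$ since $k_i \ge k_j$ for $i \le j$; this gives one implication, and the other is identical with the roles of $P_\zeta^0$ and $P_\zeta^1$ swapped. Here the monotonicity of $(k_j)$ is exactly what makes the triangular mixing harmless, and recognizing that this is the only place it is used is the one genuinely delicate bookkeeping point; the rest is routine. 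The main obstacle, such as it is, is making the inversion $modulo\ t^{m_1}$ precise enough to see that no spurious denominators (other than powers of the unit $a_0^2$) intrude — once that is set up, the $O(\zeta^{k_j})$ estimates fall out by the triangular argument above.
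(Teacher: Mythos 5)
Your proposal is correct and follows essentially the same route as the paper: both rest on the convolution identity $a_j^0=\sum_{l=0}^{j}a_{j-l}^1a_l^2$ together with the observation that a triangular convolution against holomorphic coefficients preserves the conditions $O(\zeta^{k_j})$ precisely because $(k_j)$ is decreasing. The only cosmetic difference is that the paper handles the converse by induction on $j$ (dividing by the unit $a_0^2$ at each step), which is exactly the recursive computation of your inverse $Q_\zeta$ modulo $t^{m_1}$.
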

\begin{proof}
Since $P^0$ is the product of the other two polynomials, $a_j^0= \sum_{l=0}^j a^1_{j-l}a^2_l$.
Since $(k_j)$ is decreasing,
the hypothesis $a^1_{j-l}=O(\zeta^{k_{j-l}})$ implies $a^1_{j-l}=O(\zeta^{k_{j}})$,
so $a_j^0=O(\zeta^{k_j})$.

Conversely, proceed by induction on $j$.  For $j=0$, $a_0^1 = a_0^0/a_0^2 = O(\zeta^{k_{0}})$
since the denominator does not vanish for $\zeta=0$.  Suppose the property is satisfied
for $0\le j' \le j-1$. We have
$a_j^1 = \frac1{a_0^2}\left( a_j^0 - \sum_{l=1}^{j} a^1_{j-l}a^2_l \right)$,
so by induction hypothesis $a^1_{j-l} = O(\zeta^{k_{j-l}})=O(\zeta^{k_j})$ because $(k_j)$ is decreasing, and since $a_j^0=O(\zeta^{k_j})$, we are done.
\end{proof}

{\bf Sufficient conditions.}

Using \cite[Lemma 3.1]{TTZ} and the remarks following it,
applied repeatedly,
we find some neighborhood of $0$, $\omega$, and
a holomorphically varying factorization
into mutually prime factors $P_{[\varphi(\zeta)]} (t)
= P_{[\varphi_1(\zeta)]} (t) \cdots P_{[\varphi_s(\zeta)]} (t) $, and
a splitting of the space $\C^n$,
$$
\C^n = \bigoplus_{j=1}^s \ker P_{[\varphi_j(\zeta)]} (\Phi(\zeta)).
$$
Reducing $\omega$ if needed, for each $i$, $\varphi_i \in \hol (\omega, \G_{m_i})$.
By Lemma \ref{prodpol}, it verifies the necessary conditions of vanishing
relating to the matrix $B_i \in \mathcal M_{m_i}$, with $\mbox{Sp }B_{i}= \{\lambda_i\}$.
So by Lemma \ref{singlelambda}, there exists $\Phi_i \in \hol (\omega, \Omega_{m_i})$
such that $\varphi_i = \pi \circ \Phi_i$. The ``block mapping"
$$
\Phi = \left(
\begin{array}{ccc}
\Phi_1 &  &  \\
 & \ddots & \\
 & & \Phi_s
\end{array}
\right)
$$
yields the desired lifting.
\end{proof}

\section{A formula for lifting}
\label{GML}
\subsection{A modified Jordan form}

First we need a linear algebra lemma giving us
a canonical form for matrices, slightly different from the Jordan form and adapted to our purposes.
We will call it \emph{modified Jordan form.}

We need to set some slightly modified notations. Let $A_1 \in \mathcal M_n$,
with $Sp(A_1)=\{\lambda_1, \dots , \lambda_n\}$: here the
eigenvalues are repeated according to their multiplicities.
Let $m_1, \dots, m_s$ be the respective multiplicities, set $n_j = m_1+\cdots+m_j$.
Therefore $0=n_0<n_1 <\cdots <n_s=n$ and
$\lambda_k=\lambda_{k'}$ if and only if there exists $i \in \{1,\dots,s\}$
such that $n_{i-1} < k, k' \le n_i$.

We assume that  $A_1:= (a_{ij})_{1\le i,j \le n}$
is in  Jordan form with the notations of \eqref{jf}, except
for the labeling of the eigenvalues: now $\mbox{Sp }B_k=\{\lambda_{n_k}\}$.


\begin{lemma}
\label{mjf}
A matrix $A_1$ as given above is conjugate to $A'= (a'_{ij})_{1\le i,j \le n}$
where $a'_{n_i,1+n_i}=1\neq a_{n_i,1+n_i}=0$, $1\le i \le s-1$,
and $a'_{ij}=a_{ij}$ for all other values of the indices.
\end{lemma}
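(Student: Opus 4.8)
The plan is to proceed one eigenvalue-junction at a time, so fix $i \in \{1, \dots, s-1\}$ and concentrate on the two consecutive diagonal Jordan blocks $B_i$ and $B_{i+1}$, which live on the index ranges $\{n_{i-1}+1, \dots, n_i\}$ and $\{n_i+1, \dots, n_{i+1}\}$ respectively, with distinct eigenvalues $\lambda_{n_i} \neq \lambda_{n_{i+1}}$. All the action is confined to the $(n_{i+1}-n_{i-1})\times(n_{i+1}-n_{i-1})$ block-diagonal submatrix $\operatorname{diag}(B_i, B_{i+1})$; I want to conjugate it to the same matrix but with the single entry in row $n_i$, column $n_i+1$ changed from $0$ to $1$, leaving everything else fixed. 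Since the conjugations for different $i$ act on submatrices that only overlap in inert diagonal entries, and more safely since I can simply compose the resulting global similarities one after another (checking at each stage that the previously-created $1$'s are not disturbed), it suffices to treat a single junction; I will carry that out and then remark that iterating gives the full statement.

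For the single junction, the key computation is to find an explicit conjugating matrix. Write $J_1 := B_i$ (Jordan block(s) with eigenvalue $\mu := \lambda_{n_i}$, of size $p := n_i - n_{i-1}$) and $J_2 := B_{i+1}$ (eigenvalue $\nu := \lambda_{n_{i+1}}$, size $q := n_{i+1}-n_i$), and let $N := \operatorname{diag}(J_1, J_2)$. Let $E$ be the matrix unit whose only nonzero entry is a $1$ in the position corresponding to (last row of the $J_1$-block, first column of the $J_2$-block); the target is $N + E$. I look for an invertible $P$ of the form $P = I + R$, where $R$ has nonzero entries only in the $J_1$-rows $\times$ $J_2$-columns off-diagonal block; such a $P$ is unipotent hence invertible, and conjugation by it leaves the two diagonal Jordan blocks unchanged while adding to the off-diagonal block the quantity $R J_2 - J_1 R$ (plus the desired seed). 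Concretely one needs to solve the Sylvester-type equation $J_1 R - R J_2 = -E$ (or a small variant); because $\mu \neq \nu$, the operator $X \mapsto J_1 X - X J_2$ is invertible on the space of $p\times q$ matrices, so a unique $R$ exists, and conjugating $N + E$ — wait, more cleanly: conjugate $N$ itself, $P^{-1} N P = N + (\text{off-diag correction})$, and choose $R$ so that the correction is exactly $E$. The invertibility of the Sylvester operator when the spectra are disjoint is the standard fact I will invoke; producing $R$ in closed form is a routine triangular back-substitution that I will not belabor.

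The main obstacle — really the only thing requiring care — is bookkeeping: making sure that when I compose the $s-1$ conjugations, each new one (acting across junction $i$) does not destroy the $1$'s already installed at junctions $i' < i$, and does not create spurious nonzero entries outside the single prescribed position. This is handled by the block structure: the conjugation at junction $i$ is by a matrix that is the identity outside the $J_1$-rows-$\times$-$J_2$-columns off-diagonal block between blocks $B_i$ and $B_{i+1}$, so it only modifies entries in that off-diagonal block; the entry $a'_{n_{i'},1+n_{i'}}$ for $i' \neq i$ lies in a different off-diagonal block and is untouched, and the diagonal Jordan blocks are preserved by unipotence. One should also check that all the "other" entries — in particular the subdiagonal $1$'s inside each Jordan block and the off-diagonal blocks between non-adjacent $B_k$'s, which are already $0$ — remain as they were; again this follows because the correction term $J_1 R - R J_2$ lives entirely in the one targeted off-diagonal block. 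Assembling these observations gives the claim.
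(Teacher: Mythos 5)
Your single-junction computation is sound: with $P=I+R$ and $R$ supported in the (block $i$ rows) $\times$ (block $i{+}1$ columns) position one has $R^2=0$, $P^{-1}=I-R$, and $P^{-1}\diag(J_1,J_2)P=\diag(J_1,J_2)+J_1R_{12}-R_{12}J_2$ (up to your sign convention), so invertibility of the Sylvester operator for $\mu\neq\nu$ does install the desired $1$ at one junction. The gap is precisely at the point you yourself flag as ``the only thing requiring care'': the assertion that the conjugation at junction $i$ modifies only the targeted off-diagonal block is false once junction $i-1$ has already been installed. The correction produced by conjugating the current matrix $A^{(i-1)}$ by $I+R_i$ is $A^{(i-1)}R_i-R_iA^{(i-1)}$, and $A^{(i-1)}$ is no longer block diagonal: it contains the previously created unit $E_{i-1}$ in position $(n_{i-1},n_{i-1}+1)$, and $E_{i-1}R_i$ is generically nonzero because the column of $E_{i-1}$ lies in block $i$, exactly where the rows of $R_i$ live. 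This deposits spurious entries in row $n_{i-1}$, columns $n_i+1,\dots,n_{i+1}$, i.e.\ in the non-adjacent block $(i-1,i+1)$, where $A'$ requires zeros. Concretely, take $A_1=\diag(\lambda_1,\lambda_2,\lambda_3)$ with distinct eigenvalues: the first step uses $R_1=(\lambda_1-\lambda_2)^{-1}E_{12}$ and correctly produces a $1$ in position $(1,2)$; the second step, with $R_2=(\lambda_2-\lambda_3)^{-1}E_{23}$, then yields the correct $1$ in position $(2,3)$ \emph{plus} the unwanted entry $(\lambda_2-\lambda_3)^{-1}$ in position $(1,3)$, coming from $E_{12}E_{23}=E_{13}$. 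Reversing the order of the junctions does not help: the term $R_iE_{i+1}$ then pollutes block $(i,i+2)$ instead. So the matrix you end with is conjugate to $A_1$ and upper triangular, but it is not $A'$.

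The repair is to enlarge the support of the conjugating matrix: at stage $i$, allow $R_i$ to be nonzero in all rows $1,\dots,n_i$ of the column block $n_i+1,\dots,n_{i+1}$, and solve $\tilde A R_i - R_i B_{i+1}=(\text{prescribed block})$, where $\tilde A$ is the upper-left $n_i\times n_i$ corner of the matrix built so far; this is still uniquely solvable because $\lambda_{n_{i+1}}\notin\sp(\tilde A)$. This is essentially what the paper does, column by column: each new basis vector is $e'_j=e_j+v_j$ with $v_j$ allowed to range over the sum of \emph{all} earlier generalized eigenspaces $\bigoplus_{i:\,n_i<j}V_i$, not just the adjacent one. (Alternatively, keep your adjacent-block conjugations and add a cleanup pass of conjugations supported in the non-adjacent blocks, again solvable since the $\lambda_{n_i}$ are pairwise distinct.) As written, however, your bookkeeping step does not go through.
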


Notice that this means that $a'_{ij}=0$ if $j\notin \{i,i+1\}$,
that $a'_{i,i+1}\in \{0,1\}$ and
that if $a'_{i,i+1}=0$, then $a'_{ii}=a'_{i+1,i+1}\in Sp(A_1)$.

Intuitively, at the junction of two consecutive blocks $B_k$, we
change the coefficient just above the diagonal (and outside the blocks) from $0$ to $1$.

The new basis that we will find will no longer split the space into
invariant subspaces, but we still obtain a triangular form.
What we gain is that $A'$ is cyclic if and only if $a'_{i,i+1}=1$ for all $i$,
$1\le i \le n-1$.

\begin{proof}
Let $\{e_j, 1\le j \le n\}$ be the ordered basis in which the original Jordan form is given.
Write $V_i:= \mbox{Span} \{e_j, n_{i-1} < j \le n_i\}$ for the generalized eigenspace for
the eigenvalue $\lambda_{n_i}$, in other words
$V_i=\ker (\lambda_{n_i}I_n -A_1)^n$.  Finally, let $u$ be the linear mapping associated to $A_1$.

The result will be a consequence of the following property,
 to be proved by induction on $j$, $1\le j \le n$:
$$
(P_j): \exists v_j \in \bigoplus_{i: n_i<j} V_i \mbox{ s.t. if } e'_j:= e_j + v_j,
\mbox{ then } u(e'_j)= \lambda_j e'_j + a'_{j-1,j} e'_{j-1},
$$
with $a'_{j-1,j}$ as defined in the statement of the Lemma. (Here we understand that $e'_0=0$
and an empty sum of subspaces is $\{0\}$).

The matrix $A'$ will be the matrix of $u$ in the basis $(v_1, \dots, v_n)$.

We prove $(P_j)$ by induction. $(P_1)$ is trivially satisfied with $v_1 =0$.

Now assume that $j\ge 2$, and that $(P_{j'})$ holds for $j'<j$. We are looking for $v$ such that
$$
u( e_j+v ) = \lambda_j (e_j + v) + a'_{j-1,j} (e_{j-1} + v_{j-1}),
$$
or equivalently, since $\left( u- \lambda_j \right) e_j = a_{j-1,j} e_{j-1}$,
\begin{equation}
\label{skewjordan}
\left( u- \lambda_j \right) (v) = a'_{j-1,j}  v_{j-1} + (a'_{j-1,j}- a_{j-1,j})e_{j-1}.
\end{equation}

{\bf Case 1.} There exists $i$ such that $j=n_i+1$.

Then $a_{j-1,j}=0$ and $a'_{j-1,j}=1$, so
\eqref{skewjordan} becomes $\left( u- \lambda_{n_{i+1}} \right) (v)= v_{n_i}+e_{n_i}$.
This last vector is in $W_i:=\bigoplus_{i'\le i} V_{i'}$, which is
stable under the linear map $u- \lambda_{n_{i+1}} $.  Furthermore,
the map $(u- \lambda_{n_{i+1}})|_{W_i}$
does not admit $0$ as an eigenvalue, so the equation admits a (unique) solution $v=:v_{j+1}$
in $W_i$, which is the required space since $n_i<j$, q.e.d.

{\bf Case 2.} For all $i$, $j-1\neq n_i$.

Then $a'_{j-1,j}=a_{j-1,j}$, so
\eqref{skewjordan} becomes $\left( u- \lambda_j \right) (v)= a'_{j-1,j}  v_{j-1}$,
and $\bigoplus_{i: n_i<j-1} V_i = \bigoplus_{i: n_i<j} V_i$. Again, that subspace is stable
under $u- \lambda_{j} $, the restriction of the map is a bijection, so we get
a (unique) solution $v=:v_{j+1}\in \bigoplus_{i: n_i<j} V_i$.
\end{proof}

From now on, using Remark \ref{similar}, we assume that
the matrices $A_1, \dots, A_N$ which we want to lift through are in modified Jordan form,
as defined in Lemma \ref{mjf}.

\subsection{Divided Differences}

\begin{defn}
For a polynomial $P$, the \emph{divided differences} are given recursively by:
\begin{multline*}
\Delta^0 P = P, \Delta^1 P(x_1,x_2)= \frac{P(x_1)-P(x_2)}{x_1-x_2}, \dots,
\\
\Delta^{m}P (x_1,\dots ,x_{m+1}) =
\frac{\Delta^{m-1}P (x_1,\dots ,x_{m})- \Delta^{m-1}P (x_2,\dots ,x_{m+1})}{x_1-x_{m+1}}.
\end{multline*}
\end{defn}

Recall that $\Delta^{m}P (x,\dots ,x) = \frac1{m!} P^{(m)}(x)$.  A good general
reference about divided differences is \cite{DeB}.

\subsection{A meromorphic lifting}

The following formula gives a ``meromorphic" solution to the lifting problem: some of
the matrix coefficients are given by quotients which may have poles.  Of course,
when the singularities are removable, we extend the functions in the usual way,
and the properties claimed below extend by continuity.

\begin{prop}
\label{genform}
 Let $\Phi$ be the map from $\D$ (except for some singularities)
  to $\mathcal M_n$ defined by
$$
\Phi(\zeta)  := \left(
\begin{array}{ccccc}
\phi_{1,1}(\zeta) & f_2(\zeta) & 0 & \cdots & 0 \\
0 & \phi_{2,2}(\zeta) & \ddots & & \vdots \\
\vdots & & \ddots & f_{n-1}(\zeta) & 0 \\
0 & 0 & \cdots & \phi_{n-1,n-1}(\zeta) & f_{n}(\zeta) \\
\phi_{n,1} & \phi_{n,2} & \cdots & \phi_{n,n-1} &  \phi_{n,n}
\end{array}
\right) ,
$$
where the $f_k$, $2\le k \le n$, and $\phi_{k,k}$, $1\le k \le n-1$, are holomorphic functions to be chosen,
the $f_k$ are not identically zero, and
where
\begin{equation}
\label{lastrow}
\phi_{n,\ell}:=
- \frac{\Delta^{\ell-1}P_{[\varphi(\zeta)]}(\phi_{1,1}, \dots \phi_{\ell,\ell})}{\prod_{k=\ell+1}^n f_k(\zeta)} , \quad 1 \le \ell \le n-1,
\end{equation}
and finally
$$\phi_{n,n} :=  - \Delta^{n-1}P_{[\varphi(\zeta)]}(\phi_{1,1} , \dots, \phi_{n-1,n-1} , 0)
=\varphi_1 - (\phi_{1,1} + \cdots + \phi_{n-1,n-1}).
$$

Then  $\pi \circ \Phi(\zeta) = \varphi(\zeta)$, for the values of $\zeta$ where the quotients make sense.

Let $A'$ be as in the conclusion of Lemma \ref{mjf}, and $\alpha \in \D$.
If  $f_k(\alpha)=a'_{k-1,k}$, $2\le k \le n$, $\phi_{kk}(\alpha)= \lambda_k$,
$1\le k \le n$, and
$\phi_{n,k}(\alpha)=0$, $1\le k \le n-1$, then $\Phi(\alpha)=A'$.

If $f_k (\zeta) \neq 0 $ for any $k \in \{2,\dots,n\}$ and $\zeta \in \D\setminus \{\alpha\}$,
then $\Phi(\zeta)$ is cyclic for $\zeta \in \D\setminus \{\alpha\}$.
\end{prop}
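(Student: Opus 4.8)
The plan is to verify the three assertions in turn, the heart of the matter being the identity $\pi\circ\Phi(\zeta)=\varphi(\zeta)$. For this I would expand $\det(tI_n-\Phi(\zeta))$ along the last row. Because $\Phi$ has the structure of an "arrow" matrix — upper bidiagonal in its first $n-1$ rows, plus a full bottom row — the determinant has a clean closed form. Concretely, write $D_\ell(t)$ for the determinant of the top-left $\ell\times\ell$ block of $tI_n-\Phi$; then $D_\ell(t)=\prod_{k=1}^\ell(t-\phi_{k,k})$, and cofactor expansion along the bottom row yields
\[
\det(tI_n-\Phi) = \Bigl(\prod_{k=1}^n(t-\phi_{k,k})\Bigr) - \sum_{\ell=1}^{n-1}\phi_{n,\ell}\Bigl(\prod_{k=2}^{\ell}(-f_k)\Bigr)\cdot(-1)^{?}\prod_{k=\ell+1}^{n-1}(t-\phi_{k,k}),
\]
up to sign bookkeeping I will not grind through here. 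Substituting the defining formula \eqref{lastrow} for $\phi_{n,\ell}$, the products $\prod f_k$ cancel, and one is left with
\[
\det(tI_n-\Phi(\zeta)) = \prod_{k=1}^{n-1}(t-\phi_{k,k})\cdot\bigl(t-\phi_{n,n}\bigr) + \sum_{\ell=1}^{n-1}\Delta^{\ell-1}P_{[\varphi]}(\phi_{1,1},\dots,\phi_{\ell,\ell})\prod_{k=\ell+1}^{n-1}(t-\phi_{k,k}).
\]
The claim $\pi\circ\Phi=\varphi$ is then equivalent to saying the right-hand side equals $P_{[\varphi(\zeta)]}(t)$, and this is exactly the Newton form of the interpolating polynomial: $P_{[\varphi]}(t)=\sum_{\ell=0}^{n}\Delta^{\ell}P_{[\varphi]}(\phi_{1,1},\dots,\phi_{\ell,\ell},\dots)\prod_{k=1}^{\ell}(t-\phi_{k,k})$ evaluated at the nodes $\phi_{1,1},\dots,\phi_{n-1,n-1}$ together with one more node; since $P_{[\varphi]}$ is monic of degree $n$, the Newton expansion at the $n$ nodes $\phi_{1,1},\dots,\phi_{n-1,n-1},0$ terminates with leading coefficient $1$, and the formula for $\phi_{n,n}$ is precisely $-\Delta^{n-1}$ at those nodes, so everything matches. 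I expect the main obstacle to be purely bookkeeping: getting the signs in the cofactor expansion to line up with the recursive definition of divided differences, and handling the degenerate situation where some $\phi_{k,k}(\zeta)$ coincide (which is harmless since divided differences are symmetric and holomorphic through coincidences, per the remark $\Delta^m P(x,\dots,x)=\frac1{m!}P^{(m)}(x)$).

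For the second assertion, that $\Phi(\alpha)=A'$: under the stated substitutions $f_k(\alpha)=a'_{k-1,k}$, $\phi_{k,k}(\alpha)=\lambda_k$, $\phi_{n,k}(\alpha)=0$ for $k\le n-1$, the matrix $\Phi(\alpha)$ agrees with $A'$ in all entries except possibly the $(n,n)$ entry, where we must check $\phi_{n,n}(\alpha)=\lambda_n$. But $\varphi_1(\alpha)=\sigma_1(A_1)=\sum_k\lambda_k$ (the obvious necessary condition, which is part of $\varphi(\alpha)=\pi(A_1)$, forced here since $\Phi(\alpha)$ must have the right trace; more precisely one invokes that $f_k(\alpha)=a'_{k-1,k}$ and $\phi_{kk}(\alpha)=\lambda_k$ already pin down $\pi(\Phi(\alpha))$ via the first part, hence $\varphi_1(\alpha)=\tr\Phi(\alpha)$), so $\phi_{n,n}(\alpha)=\varphi_1(\alpha)-(\lambda_1+\cdots+\lambda_{n-1})=\lambda_n$, as needed. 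Here I should be a little careful about whether the hypothesis already presupposes $\varphi(\alpha)=\pi(A_1)$; I will state it explicitly or derive it from the first part of the proposition applied at $\zeta=\alpha$.

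For the third assertion, cyclicity of $\Phi(\zeta)$ for $\zeta\neq\alpha$: I would exhibit a cyclic vector. With the bidiagonal-plus-bottom-row shape and all superdiagonal entries $f_2(\zeta),\dots,f_n(\zeta)$ nonzero, the vector $e_1$ works: $\Phi(\zeta)e_1$ has a nonzero entry in position $n$ (namely $\phi_{n,1}$) unless that vanishes, but in any case applying $\Phi(\zeta)$ repeatedly and using that the superdiagonal is nowhere zero, one checks that $\{e_1,\Phi e_1,\dots,\Phi^{n-1}e_1\}$ is upper-triangular with nonzero diagonal when read against $\{e_n,e_{n-1},\dots,e_1\}$ — concretely $\Phi(\zeta)e_k$ has a nonzero component along $e_{k-1}$ (equal to $f_k(\zeta)$) for $2\le k\le n$, which lets one propagate. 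Equivalently, and more cleanly: a matrix in this "lower Hessenberg / companion-like" form with nonzero subdiagonal-type entries $f_k$ is automatically non-derogatory because the submatrix obtained by deleting the last row and first column is triangular with nonzero diagonal $f_2\cdots f_n$, so the geometric multiplicity of every eigenvalue is $1$; I will spell this rank argument out. Throughout, I will note that wherever the quotients defining $\phi_{n,\ell}$ have removable singularities the identities extend by continuity, as already flagged before the statement.
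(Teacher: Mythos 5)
Your strategy is exactly the paper's: expand $\det(tI_n-\Phi(\zeta))$ along the last row, cancel the products of the $f_k$ against the denominators in \eqref{lastrow}, and recognize Newton's interpolation formula for the monic degree-$n$ polynomial $P_{[\varphi(\zeta)]}$ at the $n$ nodes $(\phi_{1,1},\dots,\phi_{n-1,n-1},0)$; your handling of $\Phi(\alpha)=A'$ (the $(n,n)$ entry is in fact already covered by the hypothesis $\phi_{n,n}(\alpha)=\lambda_n$) and your rank argument for cyclicity (the minor obtained by deleting the first column and last row is triangular with determinant $f_2\cdots f_n\neq 0$, so every eigenvalue has geometric multiplicity one) are the intended ones.

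One correction that is more than sign bookkeeping: in your cofactor expansion the factors of the minor $M_{n,\ell}$ are attached to the wrong index ranges. Deleting the last row and the $\ell$-th column of $tI_n-\Phi$ leaves a block-triangular matrix whose determinant is $\prod_{i=1}^{\ell-1}(t-\phi_{i,i})\cdot\prod_{j=\ell+1}^{n}(-f_j)$ --- the diagonal factors come from the columns \emph{preceding} $\ell$ and the superdiagonal factors from the columns \emph{following} $\ell$ --- whereas you wrote $\prod_{k=2}^{\ell}(-f_k)\cdot\prod_{k=\ell+1}^{n-1}(t-\phi_{k,k})$. As written, the denominator $\prod_{k=\ell+1}^{n}f_k$ of $\phi_{n,\ell}$ would not cancel, and your resulting expression $\sum_{\ell}\Delta^{\ell-1}P_{[\varphi]}(\phi_{1,1},\dots,\phi_{\ell,\ell})\prod_{k=\ell+1}^{n-1}(t-\phi_{k,k})$ does not match the Newton form $\sum_{k}\Delta^{k}P_{[\varphi]}(x_1,\dots,x_{k+1})\prod_{i=1}^{k}(t-x_i)$ that you correctly quote (which carries products over the \emph{first} nodes). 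Once the minor is corrected, the sign $(-1)^{n+\ell}\prod_{j=\ell+1}^{n}(-f_j)=\prod_{j=\ell+1}^{n}f_j$ and everything matches as in the paper.
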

Note that we sometimes omit the argument $\zeta$ in $\phi_{ij}(\zeta)$ and other
functions.  This will happen again.

\begin{proof}
The last statement is immediately verified.

To see that $\pi \circ \Phi(\zeta) = \varphi(\zeta)$,
we compute $\det (t I_n - \Phi(\zeta))$ by expanding with respect to the last row:
\begin{multline*}
=  (t- \phi_{n,n}) \prod_{i=1}^{n-1} (t-\phi_{i,i})
-\sum_{\ell = 1}^{n-1} (-1)^{n+\ell} \phi_{n,\ell} \prod_{i=1}^{\ell-1} (t-\phi_{i,i})
\prod_{i=\ell+1}^n (-f_i)
\\
= t \prod_{i=1}^{n-1} (t-\phi_{i,i})
+ \Delta^{n-1}P_{[\varphi]}(\phi_{1,1} , \dots, \phi_{n-1,n-1} , 0) \prod_{i=1}^{n-1} (t-\phi_{i,i})
\\
+ \sum_{j = 0}^{n-2} \Delta^{j}P_{[\varphi(\zeta)]}(\phi_{1,1}, \dots \phi_{j+1,j+1}) \prod_{i=1}^{j} (t-\phi_{ii})
\\
= P_{[\varphi(\zeta)]} (t),
\end{multline*}
by Newton's formula applied at the $n$ points $(\phi_{1,1} , \dots, \phi_{n-1,n-1} , 0)$.
\end{proof}

\subsection{Preliminary Computations}

\begin{lemma}\label{formule_difference}
 For natural numbers $k\le j,$ the divided difference $\Delta^k t^j$ is given
 by the formula $$\Delta^k t^j (x_1,x_2,\ldots, x_{k+1}) = \sum_{\substack{i_1,\ldots, i_{k+1}\geq 0\\ i_1+\ldots + i_{k+1}=j-k}}x_1^{i_1}x_2^{i_2}\ldots x_{k+1}^{i_{k+1}}.
 $$
\end{lemma}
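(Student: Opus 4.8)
The plan is to identify the right-hand side with the complete homogeneous symmetric polynomial of degree $j-k$ in the variables $x_1,\dots,x_{k+1}$; write $h_d(y_1,\dots,y_\ell):=\sum_{i_1+\dots+i_\ell=d}y_1^{i_1}\cdots y_\ell^{i_\ell}$. So the assertion is that $\Delta^k t^j=h_{j-k}$ as polynomials in $k+1$ variables, and I would prove this by induction on $k$ (for all $j\ge k$ simultaneously). Since $\Delta^k t^j$ is manifestly a polynomial, it suffices to verify the identity for pairwise distinct nodes, where the recursive definition applies directly, and it then extends to coinciding nodes by continuity.

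The base case $k=0$ is immediate: $\Delta^0 t^j(x_1)=x_1^j=h_j(x_1)$. For the inductive step, suppose $\Delta^{k-1}t^j(y_1,\dots,y_k)=h_{j-k+1}(y_1,\dots,y_k)$ for every choice of $k$ nodes. The recursive definition then gives
\[
\Delta^k t^j(x_1,\dots,x_{k+1})=\frac{h_{j-k+1}(x_1,\dots,x_k)-h_{j-k+1}(x_2,\dots,x_{k+1})}{x_1-x_{k+1}},
\]
so the whole lemma reduces to the algebraic identity
\[
(x_1-x_{k+1})\,h_{d}(x_1,\dots,x_{k+1})=h_{d+1}(x_1,\dots,x_k)-h_{d+1}(x_2,\dots,x_{k+1})
\]
with $d=j-k$.

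To prove that identity I would use the elementary one-variable ``peeling'' recurrences for complete homogeneous symmetric polynomials, obtained by splitting monomials according to whether a distinguished variable occurs: on one hand $h_{d+1}(x_1,\dots,x_k)=x_1h_d(x_1,\dots,x_k)+h_{d+1}(x_2,\dots,x_k)$ and $h_{d+1}(x_2,\dots,x_{k+1})=x_{k+1}h_d(x_2,\dots,x_{k+1})+h_{d+1}(x_2,\dots,x_k)$; on the other hand $h_d(x_1,\dots,x_k)=h_d(x_1,\dots,x_{k+1})-x_{k+1}h_{d-1}(x_1,\dots,x_{k+1})$ and $h_d(x_2,\dots,x_{k+1})=h_d(x_1,\dots,x_{k+1})-x_1h_{d-1}(x_1,\dots,x_{k+1})$. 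Subtracting the first two relations and then substituting the last two makes the common term $h_{d+1}(x_2,\dots,x_k)$ and the $h_{d-1}$ contributions cancel, leaving exactly $(x_1-x_{k+1})h_d(x_1,\dots,x_{k+1})$. (Equivalently, one can read the identity off the generating function $\sum_{d\ge0}h_d(y_1,\dots,y_\ell)z^d=\prod_{i=1}^\ell(1-y_iz)^{-1}$.) I do not expect a genuine obstacle here; the only thing demanding care is the bookkeeping of which variables appear in each $h$ at each stage, plus recording at the outset that $\Delta^k t^j$, though defined via a quotient, is a polynomial so that these polynomial identities are meaningful and valid for all nodes.
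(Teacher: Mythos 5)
Your proof is correct. Note, however, that the paper offers no proof of this lemma at all: it is stated as a standard fact about divided differences (the general reference given is de Boor's survey), so there is nothing to compare against line by line. Your argument --- induction on $k$ via the recursive definition, reducing everything to the identity $(x_1-x_{k+1})\,h_d(x_1,\dots,x_{k+1})=h_{d+1}(x_1,\dots,x_k)-h_{d+1}(x_2,\dots,x_{k+1})$ for complete homogeneous symmetric polynomials --- is a clean, self-contained derivation; the ``peeling'' recurrences you invoke are correct (with the convention $h_{-1}=0$ handling the boundary case $d=j-k=0$), and the generating-function check $\sum_d h_d z^d=\prod_i(1-x_iz)^{-1}$ confirms the key identity instantly. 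The one point worth phrasing more carefully is the passage to coinciding nodes: the recursive definition only makes literal sense for distinct nodes, and the value at repeated nodes is \emph{defined} by continuous extension (consistently with $\Delta^m P(x,\dots,x)=P^{(m)}(x)/m!$), so rather than saying $\Delta^k t^j$ is ``manifestly a polynomial'' you should say that your induction exhibits it as equal to the polynomial $h_{j-k}$ on the dense open set of distinct nodes, whence the identity persists under the continuous extension. With that small rewording the proof is complete.
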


\begin{lemma}\label{lemma_ord} Let $A_1$ be as in \eqref{jf}.
Let $\varphi\in \hol (\D,\G_n)$ satisfy the
conditions \eqref{neccond} at $\alpha = 0.$

\begin{enumerate}[(a)]
    \item  Let $\phi_{1,1}, \phi_{2,2}, \ldots, \phi_{m_1,m_1}\in\hol
(\D,\C)$ with $\phi_{i,i}(0) = \lambda_1$ for $1\leq i\leq m_1.$
Then
$$
\Delta^{k}P_{[\varphi(\zeta)]}(\phi_{1,1},\phi_{2,2},\ldots, \phi_{k+1,k+1})
=  O(\zeta^{d_{m_1-k}(B_1)})
$$
for $0\leq k\leq m_1-1.$

    \item Let $\lambda_1\neq \lambda_2.$ Suppose
    $$
    \Delta^{k}P_{[\varphi(\zeta)]}(\phi_{1,1},\phi_{2,2},\ldots, \phi_{k+1,k+1}) =
    O(\zeta^{d})
    $$
    for $0\leq k\leq m_1-1$ with $d\geq d_{m_2}(B_2).$  Let
    $\phi_{m_1+1,m_1+1}\in\hol (\D,\C)$ be any holomorphic function
    with $\phi_{m_1+1,m_1+1}(0) =\lambda_2,$ then
$$
\Delta^{m_1}P_{[\varphi(\zeta)]}(\phi_{1,1},\phi_{2,2},\ldots, \phi_{m_1+1,m_1+1}) =  O(\zeta^{d_{m_2}(B_2)}).
$$
\end{enumerate}

\end{lemma}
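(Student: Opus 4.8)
The plan is to prove part (a) by a direct expansion and then deduce part (b) from it via a telescoping identity for divided differences. For (a), I would expand $P_{[\varphi(\zeta)]}$ in powers of $t-\lambda_1$: write $P_{[\varphi(\zeta)]}(t)=\sum_{i=0}^{n}c_i(\zeta)\,(t-\lambda_1)^{i}$ with $c_i(\zeta)=\tfrac{1}{i!}\tfrac{d^{i}P_{[\varphi(\zeta)]}}{dt^{i}}(\lambda_1)$, so that \eqref{neccond} gives $c_i(\zeta)=O(\zeta^{d_{m_1-i}(B_1)})$ for $0\le i\le m_1-1$, while $c_i(\zeta)=O(1)$ for every $i$. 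Using linearity of divided differences, the translation invariance $\Delta^{k}(t-\lambda_1)^{i}(x_1,\dots,x_{k+1})=\Delta^{k}t^{i}(x_1-\lambda_1,\dots,x_{k+1}-\lambda_1)$, and Lemma \ref{formule_difference}, one obtains
$$
\Delta^{k}P_{[\varphi(\zeta)]}(\phi_{1,1},\dots,\phi_{k+1,k+1})=\sum_{i=k}^{n}c_i(\zeta)\sum_{\substack{i_1,\dots,i_{k+1}\ge 0\\ i_1+\cdots+i_{k+1}=i-k}}\ \prod_{j=1}^{k+1}\bigl(\phi_{j,j}(\zeta)-\lambda_1\bigr)^{i_j},
$$
and since $\phi_{j,j}(\zeta)-\lambda_1=O(\zeta)$ the inner sum is $O(\zeta^{\,i-k})$, so the $i$-th term is $O(\zeta^{\,i-k}c_i(\zeta))$. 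It then remains to check that this exponent is always at least $d_{m_1-k}(B_1)$. This rests on two elementary facts about Definition \ref{di}: $d_j(B_1)\le j$ (there are only $j-1$ integers in $[m_1-j+2,m_1]$) and $d_{j+1}(B_1)-d_j(B_1)\in\{0,1\}$ (the $b_k$ are distinct), which together give $d_{m_1-k}(B_1)-d_{m_1-i}(B_1)\le i-k$. Hence for $k\le i\le m_1-1$ we get $i-k+d_{m_1-i}(B_1)\ge d_{m_1-k}(B_1)$, while for $i\ge m_1$ we get $i-k\ge m_1-k\ge d_{m_1-k}(B_1)$; summing over $i$ proves (a).

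For (b), put $y_j:=\phi_{j,j}(\zeta)$ for $1\le j\le m_1$ (so $y_j(0)=\lambda_1$) and $z:=\phi_{m_1+1,m_1+1}(\zeta)$ (so $z(0)=\lambda_2\ne\lambda_1$). Using the symmetry of divided differences and iterating the defining recursion while always keeping $z$ as the first node, an easy induction yields, on a neighborhood of $0$ where all the factors $z-y_j$ are nonzero,
$$
\Delta^{m_1}P_{[\varphi(\zeta)]}(y_1,\dots,y_{m_1},z)=\frac{P_{[\varphi(\zeta)]}(z)}{\prod_{j=1}^{m_1}(z-y_j)}-\sum_{k=0}^{m_1-1}\frac{\Delta^{k}P_{[\varphi(\zeta)]}(y_1,\dots,y_{k+1})}{\prod_{j=k+1}^{m_1}(z-y_j)}.
$$
Every denominator occurring here tends as $\zeta\to0$ to a nonzero power of $\lambda_2-\lambda_1$, hence stays bounded away from $0$ near $0$; each numerator $\Delta^{k}P_{[\varphi(\zeta)]}(y_1,\dots,y_{k+1})$ with $0\le k\le m_1-1$ is $O(\zeta^{d})=O(\zeta^{d_{m_2}(B_2)})$ by the hypothesis of (b) together with $d\ge d_{m_2}(B_2)$; and the leftover term is controlled by observing that $P_{[\varphi(\zeta)]}(z)=O(\zeta^{d_{m_2}(B_2)})$, which is exactly part (a) applied to the block $B_2$ with $k=0$ (the proof of (a) used nothing about the index $1$, only that $\varphi$ satisfies \eqref{neccond} at the eigenvalue in question, and $z(0)=\lambda_2$). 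Adding up gives $\Delta^{m_1}P_{[\varphi(\zeta)]}(y_1,\dots,y_{m_1},z)=O(\zeta^{d_{m_2}(B_2)})$, which is (b).

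The only genuinely non-routine point I anticipate is in (b): one must hit upon the right telescoping identity, which isolates a single term $P_{[\varphi(\zeta)]}(z)/\prod(z-y_j)$ that is \emph{not} covered by the hypothesis of (b) and must instead be controlled by re-invoking part (a) for the \emph{other} Jordan block $B_2$; a direct estimate of $P_{[\varphi(\zeta)]}$ along the segment $[\lambda_1,\lambda_2]$ is unavailable, since the vanishing of $\varphi$ is only prescribed at the eigenvalues themselves. Once this structure is seen, the rest is the bookkeeping with the integers $d_i(B_j)$ described above, which is the straightforward part.
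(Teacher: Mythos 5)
Your proposal is correct and follows essentially the same route as the paper: part (a) is the same Taylor-expansion-plus-Lemma \ref{formule_difference} computation combined with $d_{j+1}\le d_j+1$ (you are slightly more careful with the terms $i\ge m_1$), and your ``telescoping identity'' in part (b) is exactly the Newton interpolation formula at the nodes $\phi_{1,1},\dots,\phi_{m_1+1,m_1+1}$ solved for the top divided difference, with $P_{[\varphi(\zeta)]}(z)$ controlled, as in the paper, by re-applying part (a) with the roles of the two eigenvalues switched.
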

\begin{proof}
  To prove (a), we write $P_{[\varphi(\zeta)]}$ using the Taylor formula:
$$
P_{[\varphi(\zeta)]}(t)
= \sum_{j=0}^n \frac{P_{[\varphi(\zeta)]}^{(j)}(\lambda_1)}{j!}(t-\lambda_1)^j.
$$
for $1\leq i\leq m_1,$ put $\phi_{i,i} = \lambda_1 + \phi_i$, so
that $\phi_i(\zeta)=O(\zeta)$.  By
linearity of the divided difference operator $\Delta^k,$
\begin{align*}
&\Delta^{k}P_{[\varphi(\zeta)]}(\phi_{1,1},\phi_{2,2},\ldots,
\phi_{k+1,k+1}) =
\\ & =  \sum_{j=0}^n
\frac{P_{[\varphi(\zeta)]}^{(j)}(\lambda_1)}{j!}\Delta^{k}(t-\lambda_1)^j
(\phi_{1,1},\phi_{2,2},\ldots, \phi_{k+1,k+1})
\\ & = \sum_{j=k}^n
\frac{P_{[\varphi(\zeta)]}^{(j)}(\lambda_1)}{j!}
\sum_{\substack{i_1,\ldots, i_{k+1}\geq 0\\ i_1+\ldots +
i_{k+1}=j-k}}\phi_1^{i_1}\phi_2^{i_2}\ldots \phi_{k+1}^{i_{k+1}}
\\&= \sum_{j=k}^n \frac{P_{[\varphi(\zeta)]}^{(j)}(\lambda_1)}{j!} O(\zeta^{j-k})
= \sum_{j=k}^n O(\zeta^{d_{m_1}-j(B_1)+j-k}),
\end{align*}
by the conditions  \eqref{neccond}. It follows from Definition \ref{di}
that $d_{j+1}(B_1) \leq d_j(B_1)+1$
for $1\leq j\leq n_1-1,$ so we get, for $k\le j$, $d_{m_1-j}(B_1) \ge d_{m_1-k}(B_1)-(j-k)$,
which implies (a).

To prove (b), we use the Newton interpolation formula
\begin{multline*}
 P_{[\varphi(\zeta)]}(\phi_{m_1+1,m_1+1}) =  \\=\sum_{k=0}^{m_1} \Delta^k
P_{[\varphi(\zeta)]}(\phi_{1,1},\ldots,\phi_{k+1,k+1})\cdot
(\phi_{m_1+1,m_1+1} - \phi_{1,1})\cdots
(\phi_{m_1+1,m_1+1}-\phi_{k,k}).
\end{multline*}

To estimate the left hand side, we may switch the order of the first two
eigenvalues, and apply part (a) of the Lemma with $k=0$ (so the condition
$\phi_{m_1+1,m_1+1}(0) =\lambda_2$ is enough), thus we get
\newline
$P_{[\varphi(\zeta)]}(\phi_{m_1+1,m_1+1}) = O(\zeta^{d_{m_2}(B_2)})$.

By the hypothesis, the right hand side can be written
$$
P_{[\varphi(\zeta)]}(\phi_{1,1},\ldots,\phi_{m_1+1,m_1+1})\cdot(\phi_{m_1+1,m_1+1}(0) - \phi_{m_1,m_1}(0))
(1+O(\zeta)) + O(\zeta^d),
$$
since $\lambda_1\neq \lambda_2$
implies that $(\phi_{m_1+1,m_1+1}(0) - \phi_{m_1,m_1}(0))\neq 0$.

Solving the equation for $P_{[\varphi(\zeta)]}(\phi_{1,1},\ldots,\phi_{m_1+1,m_1+1})$
yields the result.
\end{proof}

\section{The case where $n\le 5$}
\label{N4}

\begin{theorem}
\label{lift4}
Let $n \in \N^*$, $n\le 5$,
 $A_1, \dots, A_N \in \mathcal M_n$, $\alpha_1, \dots, \alpha_N\in \D$
 and $\varphi \in \hol (\D,\G_n)$.

 Then there exists $\Phi\in\hol (\D,\Om_n)$ satisfying $\phi=\pi\circ\Phi$
and $\Phi(\alpha_j)=A_j$ for $j=1,\dots,N$ if and only if $\varphi$
satisfies the conditions \eqref{neccond} for
each $j$, with $A_j$ instead of $A_1$  and $\alpha_j$ instead
of $\alpha$, for $1\le j \le N$.

Furthermore, the values $\Phi(\zeta)$ may be chosen as cyclic matrices
when $\zeta \notin \{ \alpha_1, \dots, \alpha_n\}$.
\end{theorem}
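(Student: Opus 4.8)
The strategy is to construct a global lifting $\Phi$ by patching together, via the formula of Proposition \ref{genform}, local liftings near each interpolation point $\alpha_j$. Since the formula in Proposition \ref{genform} already guarantees $\pi \circ \Phi = \varphi$ identically and produces a cyclic matrix wherever all the functions $f_k$ are nonzero, the real work is to choose the free holomorphic data $\phi_{1,1}, \dots, \phi_{n-1,n-1}$ and $f_2, \dots, f_n$ so that simultaneously (i) $\Phi(\alpha_j) = A'_j$ for each $j$, where $A'_j$ is the modified Jordan form of $A_j$ from Lemma \ref{mjf}, and (ii) each $f_k$ vanishes only at the points $\alpha_j$ (so that the values away from the interpolation nodes are cyclic, as required by the last sentence of the theorem), and (iii) the meromorphic quotients $\phi_{n,\ell}$ in \eqref{lastrow} are in fact holomorphic on all of $\D$.

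The first concrete step is to reduce to $A_j$ in modified Jordan form (Remark \ref{similar} plus Lemma \ref{mjf}): in that form the diagonal entries $a'_{\ell,\ell}$ are the eigenvalues of $A_j$ listed with multiplicity, and the superdiagonal entries $a'_{k-1,k} \in \{0,1\}$, so we must arrange $\phi_{\ell,\ell}(\alpha_j) = \lambda_\ell^{(j)}$ and $f_k(\alpha_j) = (a'_j)_{k-1,k}$. Choosing the $\phi_{\ell,\ell}$ to match prescribed values at finitely many points is elementary (interpolating polynomials, or Blaschke-type products for the sup-norm bound needed to stay in $\Omega_n$); the delicate object is each $f_k$, which must equal $1$ or $0$ at each $\alpha_j$ and be nonvanishing elsewhere, hence we take $f_k(\zeta) := \prod_{j : (a'_j)_{k-1,k}=0} (\zeta - \alpha_j) \cdot g_k(\zeta)$ for a suitable nonvanishing holomorphic $g_k$ adjusting the nonzero target values. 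The heart of the matter, and what makes the dimension restriction $n \le 5$ essential, is then step (iii): one must verify that with these choices the numerator $\Delta^{\ell-1} P_{[\varphi(\zeta)]}(\phi_{1,1}, \dots, \phi_{\ell,\ell})$ in \eqref{lastrow} vanishes, at each $\alpha_j$, to order at least that of the denominator $\prod_{k=\ell+1}^n f_k$. This is exactly where Lemma \ref{lemma_ord} enters: part (a) handles vanishing contributed by a single Jordan block at a point, part (b) propagates the vanishing estimate across the passage from one eigenvalue to the next, and the conditions \eqref{neccond} assumed on $\varphi$ feed these lemmas. One tracks, block by block along the diagonal of $A'_j$, the order of vanishing of the successive divided differences against the accumulated order of vanishing of the $f_k$'s below index $\ell$; the bookkeeping closes up precisely because for $n \le 5$ the arithmetic relations $d_{i+1} \le d_i + 1$ together with the small number of blocks leave no room for a deficit.

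The main obstacle is this last verification: one must show that the pole orders never exceed the zero orders of the numerators, \emph{simultaneously for all $\ell$ and all $j$}, and that the $g_k$ can still be chosen to keep $r(\Phi(\zeta)) < 1$ on $\D$ (boundedness of $\Phi$ into $\Omega_n$). For the spectral-radius bound, I would exploit that $\pi \circ \Phi = \varphi$ maps into $\G_n$ by hypothesis, so the eigenvalues of $\Phi(\zeta)$ are automatically of modulus $<1$ — the only genuine constraint is holomorphy, i.e. removability of the apparent poles, which is the combinatorial claim above. I expect the proof to proceed by fixing one $\alpha_j$ at a time (the conditions near distinct $\alpha_j$ being independent since we can localize each $f_k$'s vanishing), invoking Lemma \ref{lemma_ord} iteratively down the diagonal exactly as in the sufficiency half of Proposition \ref{localnasc}, and then checking the handful of cases of partitions of $n \le 5$ by hand to confirm the order inequality; the counterexample promised in the abstract for $n \ge 6$ shows this last step cannot be made to work in general, so some genuinely dimension-specific inequality is doing the work and should be isolated explicitly.
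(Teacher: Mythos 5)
Your skeleton agrees with the paper's: reduce to modified Jordan form, use the meromorphic formula of Proposition \ref{genform}, prescribe $f_k(\alpha_j)=a'_{k-1,k}$ with simple zeros only at the nodes, prescribe $\phi_{k,k}(\alpha_j)=\lambda_k(A_j)$, note that $r(\Phi)<1$ is automatic from $\pi\circ\Phi=\varphi\in\hol(\D,\G_n)$, and then verify that the numerators in \eqref{lastrow} vanish at each $\alpha_j$ to order strictly greater than the denominators. The ``only if'' direction via Proposition \ref{localnasc} and the gluing of the finitely many local jet conditions by polynomial interpolation are also as in the paper.

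The gap is in your final step. You assert that the order bookkeeping ``closes up'' for $n\le 5$ by feeding \eqref{neccond} into Lemma \ref{lemma_ord} ``exactly as in the sufficiency half of Proposition \ref{localnasc}.'' It does not: already for $n=4$ with two distinct eigenvalues each of multiplicity $2$ (the form \eqref{mjf3}, where $a_{12}=a_{34}=0$, $a_{23}=1$), the denominator $f_2f_3f_4$ of $\phi_{4,1}$ has a double zero at $\alpha_j$, so one needs $P_{[\varphi(\zeta)]}(\phi_{1,1})=O(\zeta^3)$, whereas \eqref{neccond} only gives $P_{[\varphi(\zeta)]}(\lambda_1)=O(\zeta^2)$ and Lemma \ref{lemma_ord}(a) only yields $O(\zeta^2)$ for an arbitrary $\phi_{1,1}$ with $\phi_{1,1}(\alpha_j)=\lambda_1$. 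There is a genuine deficit of one order (and of more in the $n=5$ cases \eqref{mjf51}--\eqref{mjf52}). The missing idea --- the heart of the paper's proof, stated there as the ``Claim'' --- is that the \emph{derivatives} $\phi'_{1,1}(\alpha_j), \phi'_{2,2}(\alpha_j),\dots$ are additional unknowns: expanding $P_{[\varphi(\zeta)]}(\lambda+a)$ by Taylor's formula in $t$, the offending coefficient of $\zeta^2$ is a polynomial in $a'(\alpha_j)$ whose leading coefficient is $\tfrac1{2!}P''_{[\varphi(\alpha_j)]}(\lambda_1)\neq 0$ (since $\lambda_1$ has multiplicity exactly $2$), so one can choose $a'(\alpha_j)$ as a root of a non-trivial quadratic to gain the extra order, and similarly tune $b'(\alpha_j)$, $c'(\alpha_j)$ for the higher divided differences. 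This tuning, not Lemma \ref{lemma_ord} alone, is the dimension-specific mechanism; the counterexample of Section \ref{cex} shows precisely that for $n\ge 6$ no choice of $\phi_{1,1}$ can produce the required extra vanishing, since there $P_{B_k^{\lambda_1}(\zeta)}(\phi_{1,1})=(\phi_{1,1}-\lambda_1)^k-\zeta^{k-1}$ has order exactly $k-1$ whatever $\phi_{1,1}$ is. Without isolating and carrying out this derivative-tuning step, your plan does not prove the theorem even for $n=4$.
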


The case $n=2$ of this theorem is covered by \cite{Agl-You1} and the case $n=3$
by \cite{Tho-Tra} and \cite{NPT}.

\begin{proof}
Proposition \ref{localnasc} proves the ``only if" part of the theorem.

To prove the ``if" part, in view of Proposition \ref{genform}, it will be enough to show that
we can choose $f_k$ and $\phi_{k,k}$ such that for each
$j$, they and the entries $\phi_{n,k}$ are defined and assume the correct values
at $\alpha_j$.

Without loss of generality (by Remark \ref{similar}), we may assume
that $A_j$ is in modified Jordan form and we write
$A_j=(a^j_{i,l})_{1\le i,l\le n}$ . A first requirement is that
$f_k(\alpha_j)= a^j_{k-1,k}$, $2\le k \le n$, $1\le j \le N$, and
$\phi_{k,k}(\alpha_j)=a^j_{k,k}=\lambda_k(A_j)$, $1\le k \le n$,
$1\le j \le N$. We also require that $f_k(\zeta)\neq 0$ unless
$\zeta=\alpha_j$ and $a^j_{k-1,k}=0$, and that all of the zeros of
each $f_k$ are simple.

{\bf Claim.} We can determine further conditions (if needed) on
a finite number of the successive derivatives of $\phi_{k,k}$
at $\alpha_j$ to ensure that the entries $\phi_{n,k}$, $1\le k \le n-1$,
 are defined and assume the value $0$
at $\alpha_j$.

If this claim holds, then polynomial interpolation, for instance, lets us find
holomorphic functions $f_k$ and $\phi_{k,k}$ satisfying all
the above requirements and the conditions in the Claim.

The remainder of the argument is devoted to the proof of the above claim.
It will be enough to work at one point $\alpha_j \in \D$, which we may take to be $0$
to simplify notations.  Likewise the matrix $A_j$ will be denoted $A_1=(a_{ij})_{1\le i,j \le n}$,
$n=4 $ or $5$,
and will be in modified Jordan form.
Recall that the conditions \eqref{neccond} already imply
that $P_{[\varphi(0)]} = P_{A_1}$.

\subsection{The case $n=4$}

We consider different cases according to the values of $(a_{12}, a_{23}, a_{34}) \in \{0,1\}^3$.
If $a_{k-1,k}=1$ for all $k$, $A_1$ is cyclic and the denominators in the formula for $\phi_{4,k}$
never vanish at $\zeta=0$; since $\phi_{k,k}(0)=\lambda_k$ which is a zero of
$P_{A_1}=P_{[\varphi(0)]}$, $\Delta^{\ell-1}P_{[\varphi(0)]}$ always vanishes at
$(\phi_{1,1}, \dots, \phi_{\ell, \ell})$ for $\ell \le 3$, and we are done.

If $a_{k-1,k}=0$ for all $k$, all the eigenvalues of $A_1$ must be equal and
the proof is (essentially) done in \cite[Proof of Proposition 4.1]{Tho-Tra}.
In what follows we always assume that $A_1$ admits
at least two distinct eigenvalues.

If there is exactly one value of $k\in \{2,3,4\}$
such that $a_{k-1,k}=0$, then we have an eigenspace of dimension
$2$, say for $\lambda_1=\lambda_2$. The corresponding generalized eigenspace,
$\ker (\lambda_1 I_4 - A_1)^4$, can be of dimension
$2$ or $3$.  Reverting momentarily to the Jordan form, in the first case the matrix splits into two
$2\times2$
blocks with distinct eigenvalues
 and we may assume $\ker(\lambda_1 I_4-A_1)= \mbox{Span}\{e_1, e_2\}$, and $\lambda_1 \notin
 \{\lambda_3, \lambda_4\}$.  In the second case, $\lambda_3=\lambda_1$
 and we may assume $\ker(\lambda_1 I_4-A_1)^2= \mbox{Span}\{e_1, e_2, e_3\}$.

 In each case, with the basis vectors being permuted as needed,
  $A_1$ admits the following modified Jordan form:
 \begin{equation}
 \label{mjf1}
A_1=
\left(
\begin{array}{cccc}
\lambda_1 & 0 & 0 & 0 \\
0 & \lambda_1 & 1 & 0 \\
0 & 0 & \lambda_3 & 1  \\
0 &  0 &  0 & \lambda_4
\end{array}
\right), \mbox{ with }  \lambda_4\neq \lambda_1.
 \end{equation}

If there are exactly two values of $k\in \{2,3,4\}$
such that $a_{k-1,k}=0$, there are two possible cases:
 in the first case, the
 two indices $k$ such that $a_{k-1,k}=0$
  are consecutive and we have an eigenspace of dimension $3$ (and since there are at least
 two distinct eigenvalues, the generalized eigenspace is equal to it), so that
 $A_1$ admits the following modified Jordan form:
 \begin{equation}
 \label{mjf2}
A_1=
\left(
\begin{array}{cccc}
\lambda_1 & 0 & 0 & 0 \\
0 & \lambda_1 & 0 & 0 \\
0 & 0 & \lambda_1 & 1  \\
0 &  0 &  0 & \lambda_4
\end{array}
\right), \mbox{ with }  \lambda_4\neq \lambda_1 .
 \end{equation}

In the second case, we have $(a_{12}, a_{23}, a_{34})=(0,1,0)$,
in which case $A_1$ admits the following modified Jordan form:
 \begin{equation}
 \label{mjf3}
A_1=
\left(
\begin{array}{cccc}
\lambda_1 & 0 & 0 & 0 \\
0 & \lambda_1 & 1 & 0 \\
0 & 0 & \lambda_3 & 0  \\
0 &  0 &  0 & \lambda_3
\end{array}
\right), \mbox{ with }  \lambda_3\neq \lambda_1 .
 \end{equation}
\vskip.3cm 
{\bf Case 1: $A_1$ as in \eqref{mjf1}.} Since
$f_3(0)=f_4(0)=1$ and $f_2$ has a simple zero at $0$, we need to
prove that
\begin{equation}
\label{4ord0}
P_{[\varphi(\zeta)]}(\phi_{1,1})=O(\zeta^2),
\end{equation}
\begin{equation}
\label{4ord1}
\Delta^{1}P_{[\varphi(\zeta)]}(\phi_{1,1},\phi_{2,2})=O(\zeta),
\end{equation}
\begin{equation}
\label{4ord2}
\Delta^{2}P_{[\varphi(\zeta)]}(\phi_{1,1},\phi_{2,2},\phi_{3,3})=O(\zeta).
\end{equation}

In this case, the conditions $\eqref{neccond}$ tell us that
$$
P_{[\varphi(0)]}(t)= (t-\lambda_1)^2(t-\lambda_3)(t-\lambda_4),
$$
and
\begin{equation}
\label{dnc1} P_{[\varphi(\zeta)]}(\lambda_1)= O(\zeta^2).
\end{equation}

Applying Lemma \ref{lemma_ord}(a) with $n_1=2$, $k=0,1$, we get \eqref{4ord0} and \eqref{4ord1},
while, when $\lambda_1 \neq \lambda_3$, we get \eqref{4ord2} 
by applying Lemma \ref{lemma_ord}(b) with $n_2=1$.
When $\lambda_1 = \lambda_3$, we see that $P_{A_1}$ admits a zero of order $3$ at $\lambda_1$ and
since $\phi_{1,1}(0)=\phi_{2,2}(0)=\phi_{3,3}(0)= \lambda_1$,
$\Delta^{2}P_{[\varphi(\zeta)]}(\phi_{1,1},\phi_{2,2},\phi_{3,3})$ vanishes when $\zeta=0$, q.e.d.

\vskip.3cm
{\bf Case 2: $A_1$ is as in \eqref{mjf2}.}

In this case, the conditions $\eqref{neccond}$ tell us that
$$
P_{[\varphi(0)]}(t)= (t-\lambda_1)^3(t-\lambda_4),
$$
and, since $d_i(B_1)=i$ for $i=1,2,3$,
\begin{equation}
\label{dnc2}
P_{[\varphi(\zeta)]}(\lambda_1)= O(\zeta^3), \quad
P'_{[\varphi(\zeta)]}(\lambda_1)= O(\zeta^2).
\end{equation}
Applying Lemma \ref{lemma_ord}(a) with $m_1=3$, we obtain
$P_{[\varphi(\zeta)]}(\phi_{1,1})=O(\zeta^3)$,
$\Delta^{1}P_{[\varphi(\zeta)]}(\phi_{1,1},\phi_{2,2})=O(\zeta^2)$,
$\Delta^{2}P_{[\varphi(\zeta)]}(\phi_{1,1},\phi_{2,2},\phi_{3,3})=O(\zeta).$
Since $f_2$ and $f_3$ have a simple zero at $0$, and $f_4(0) \neq 0$,
this is what we need for the $\phi_{4,l}$ defined in \eqref{lastrow} to vanish
at $0$, for $1\le l \le 3$.

\vskip.3cm
{\bf Case 3: $A_1$ as in \eqref{mjf3}.}

This is the most complicated case.

In this case, the conditions $\eqref{neccond}$ tell us that
\begin{equation}
\label{nct}
P_{[\varphi(0)]}(t)= (t-\lambda_1)^2(t-\lambda_3)^2,
\end{equation}
and
\begin{equation}
\label{dnc3}
P_{[\varphi(\zeta)]}(\lambda_1)= O(\zeta^2), \quad
P_{[\varphi(\zeta)]}(\lambda_3)= O(\zeta^2).
\end{equation}

Referring again to \eqref{lastrow}, we need to prove that
\begin{equation}
\label{van1}
P_{[\varphi(\zeta)]}(\phi_{1,1})=O(\zeta^3),
\end{equation}
\begin{equation}
\label{van2}
\Delta^{1}P_{[\varphi(\zeta)]}(\phi_{1,1},\phi_{2,2})=O(\zeta^2),
\end{equation}
\begin{equation}
\label{van3}
\Delta^{2}P_{[\varphi(\zeta)]}(\phi_{1,1},\phi_{2,2},\phi_{3,3})=O(\zeta^2).
\end{equation}
To obtain this, we will need to choose adequate values for $\phi'_{1,1}(0)$ and $\phi'_{2,2}(0)$.

\vskip.3cm {\bf Proof of \eqref{van1}.}

Put $\phi_{1,1} = \lambda_1 + a,$  $\phi_{2,2} = \lambda_1+b,$
where $a,b$ are functions of $\zeta \in \D$. Note
that $a^{\prime}(0) = \phi_{1,1}^{\prime}(0)$ and
$b^{\prime}(0)=\phi_{2,2}^{\prime}(0).$

As in the proof of Lemma \ref{lemma_ord}(a), we have
\begin{align*}P_{[\varphi(\zeta)]}(\phi_{1,1})
 & = \Delta^{0}P_{[\varphi(\zeta)]}(\lambda_1 +a)\\ &=
P_{[\varphi(\zeta)]}(\lambda_1) +
P^{\prime}_{[\varphi(\zeta)]}(\lambda_1)a +
P^{\prime\prime}_{[\varphi(\zeta)]}(\lambda_1)\frac{a^2}{2!} +
O(\zeta^3).
\end{align*}

By \eqref{dnc3}, we have $P_{[\varphi(\zeta)]}(\lambda_1) = O(\zeta^2),$
and
$P^{\prime}_{[\varphi(\zeta)]}(\lambda_1)a = O(\zeta^2)$.
Furthermore by \eqref{nct},
$P^{\prime\prime}_{[\varphi(\zeta)]}(\lambda_1)\neq 0$ at $\zeta=0.$
So we can choose $a^{\prime}(0)$ as a solution of a non-trivial
quadratic equation to cancel out the terms of degree $2$ in $\zeta$.

\vskip.3cm
{\bf Proof of \eqref{van2}.}

As in the proof of Lemma \ref{lemma_ord}(a)
\begin{align*}
\Delta^{1}P_{[\varphi(\zeta)]}(\phi_{1,1},\phi_{2,2}) &=
\Delta^{1}P_{[\varphi(\zeta)]}(\lambda_1 + a,\lambda_1 +b)
\\&= P_{[\varphi(\zeta)]}^{\prime}(\lambda_1) +
P_{[\varphi(\zeta)]}^{\prime\prime}(\lambda_1)\frac{ a+b}{2!} +
O(\zeta^2).
\end{align*}

We have $P_{[\varphi(\zeta)]}^{\prime}(\lambda_1) = O(\zeta)$ since
$\lambda_1$ is a double root of $P_{[\varphi(0)]},$ and
$P_{[\varphi(\zeta)]}^{\prime\prime}(\lambda_1)\neq 0$  at $\zeta
=0.$ So we can chose $b^{\prime}(0)$ to cancel out the terms of degree $1$
in $\zeta$.

\vskip.3cm
{\bf Proof of \eqref{van3}.}

Taking $\phi_{1,1}$ and $\phi_{2,2}$ as explained above, we apply Lemma
\ref{lemma_ord}(b) with $m_1=m_2=2$, and we find that
$$
\Delta^{2}P_{[\varphi(\zeta)]}(\phi_{1,1},\phi_{2,2},\phi_{3,3})=O(\zeta^2)
$$
for every $\phi_{3,3}$ with $\phi_{3,3}(0)=\lambda_2.$
\end{proof}

\subsection{The case $n = 5$}


As in the case $n = 4,$ assume that $A_1$ admits at least two distinct
eigenvalues and $A_1$ is non-cyclic, so in particular there is
at least one multiple eigenvalue.

If $A_1$ admits at least three distinct eigenvalues, $A_1$ admits the following
modified Jordan form
 \begin{equation}
 \label{mjf_general}
 A_1=
\left(
\begin{array}{ccccc}
\lambda_1 & 0 & 0 & 0 & 0 \\
0 & \lambda_1 & \ast & 0  & 0\\
0 & 0 & \lambda_3 & \ast   & 0\\
0 &  0 &  0 & \lambda_4 &1 \\ 0 & 0 & 0 & 0 & \lambda_5
\end{array}
\right), \mbox{ with }  \lambda_5\neq \lambda_1,\lambda_3,\lambda_4
 \end{equation} where the stars stand for 0 or 1.

Suppose that $\Phi(\zeta)$ is as in Proposition \ref{genform}.

 Since we always have $\Delta^3
 P_{[\varphi(\zeta)]}(\phi_{1,1},\phi_{2,2},\phi_{3,3},\phi_{4,4})=0$
 at $\zeta=0,$ and $\phi_{5,4} = -\Delta^3
 P_{[\varphi(\zeta)]}(\phi_{1,1},\phi_{2,2},\phi_{3,3},\phi_{4,4})$
 from \eqref{lastrow}, it suffices to
 consider the divided differences up to order 2, and the proof
 goes through as in the case $n =4.$  The same obtains when $\lambda_1=\lambda_3=\lambda_4
 \neq \lambda_5$.

 Therefore, in this case, it suffices to consider the case where $A_1$
 admits exactly two distinct eigenvalues, each of multiplicity at least $2$. So $A_1$ admits the
 following modified Jordan form
  \begin{equation}
 \label{mjf_general1}
A_1=
\left(
\begin{array}{ccccc}
\lambda & b_{12} & 0 & 0 & 0 \\
0 & \lambda & b_{23} & 0  & 0\\
0 & 0 & \lambda & 1   & 0\\
0 &  0 &  0 & \mu & b_{45} \\ 0 & 0 & 0 & 0 & \mu
\end{array}
\right), \mbox{ with }  \lambda\neq \mu.
 \end{equation}

 If there is a cyclic block for $\lambda$ or $\mu$, i.e $b_{12} = b_{23} = 1$
 or $b_{45}=1,$ then we can change the order of the basis vectors to
 put that cyclic block in the lower right hand corner
 of the matrix and
 reduce the proof as in the case $n =4.$ So we only need to consider
 the case where these two blocks are non-cyclic. In this case, $A_1$
 admits one of the following modified Jordan forms:
  \begin{equation}
 \label{mjf51}
A_1=
\left(
\begin{array}{ccccc}
\lambda & 0 & 0 & 0 & 0 \\
0 & \lambda & 0 & 0  & 0\\
0 & 0 & \lambda & 1   & 0\\
0 &  0 &  0 & \mu & 0 \\ 0 & 0 & 0 & 0 & \mu
\end{array}
\right),
 \end{equation}
 or  \begin{equation}
 \label{mjf52}
A_1=
\left(
\begin{array}{ccccc}
\mu & 0 & 0 & 0 & 0 \\
0 & \mu & 1 & 0  & 0\\
0 & 0 & \lambda & 0   & 0\\
0 &  0 &  0 & \lambda & 1 \\ 0 & 0 & 0 & 0 & \lambda
\end{array}
\right).
 \end{equation}

{\bf Case 1: $A_1$ as in \eqref{mjf51}.}

In this case, the conditions \eqref{neccond} tell us that
\begin{equation}\label{condition51}
P_{[\varphi(\zeta)]}(\lambda) = O(\zeta^3),
P^{\prime}_{[\varphi(\zeta)]}(\lambda) = O(\zeta^2),
P^{\prime\prime}_{[\varphi(\zeta)]}(\lambda) = O(\zeta)
\end{equation} and \begin{equation}P_{[\varphi(\zeta)]}(\mu) = O(\zeta^2),
P^{\prime}_{[\varphi(\zeta)]}(\mu) = O(\zeta).\end{equation}

We need to prove that
\begin{equation}
\label{van51} P_{[\varphi(\zeta)]}(\phi_{1,1})=O(\zeta^4),
\end{equation}
\begin{equation}
\label{van52}
\Delta^{1}P_{[\varphi(\zeta)]}(\phi_{1,1},\phi_{2,2})=O(\zeta^3),
\end{equation}
\begin{equation}
\label{van53}
\Delta^{2}P_{[\varphi(\zeta)]}(\phi_{1,1},\phi_{2,2},\phi_{3,3})=O(\zeta^2),
\end{equation}\begin{equation}
\label{van54}
\Delta^{3}P_{[\varphi(\zeta)]}(\phi_{1,1},\phi_{2,2},\phi_{3,3},\phi_{4,4})=O(\zeta^2).
\end{equation}

To obtain \eqref{van51}, \eqref{van52}, and \eqref{van53}, we will need to choose adequate values for
$\phi'_{1,1}(0),$ $\phi'_{2,2}(0),$ and $\phi^{\prime}_{3,3}(0).$
Note that as soon as we will have chosen these values, \eqref{van54}
will be automatically satisfied by Lemma \ref{lemma_ord}(b),
applied with $m_1=3$, $m_2=2$, and $d=2$.

\vskip.3cm {\bf Proof of \eqref{van51}.} Put $\phi_{1,1}=\lambda+a,$
$\phi_{2,2}=\lambda+b$ and $\phi_{3,3}=\lambda+c.$ As in the proof of
Lemma \ref{lemma_ord}(a), we write
\begin{align*} P_{[\varphi(\zeta)]}(\phi_{1,1}) &= \Delta^0
P_{[\varphi(\zeta)]}(\lambda +a)\\ & = P_{[\varphi(\zeta)]}(\lambda)
+ P^{\prime}_{[\varphi(\zeta)]}(\lambda)a +
P^{\prime\prime}_{[\varphi(\zeta)]}(\lambda)\frac{a^2}{2!} +
P^{\prime\prime\prime}_{[\varphi(\zeta)]}(\lambda)\frac{a^3}{3!} +
O(\zeta^4).
\end{align*}

From the conditions \eqref{condition51}, we find that

$$\mathrm{ord}_{\zeta=0}\left\{P_{[\varphi(\zeta)]}(\lambda)+
P^{\prime}_{[\varphi(\zeta)]}(\lambda)a +
P^{\prime\prime}_{[\varphi(\zeta)]}(\lambda)\frac{a^2}{2!}\right\}
\geq 3$$ and $P^{\prime\prime\prime}_{[\varphi(\zeta)]}(\lambda)\neq
0$ at $\zeta=0.$ So we can choose $a^{\prime}(0)$ as a solution of a
non-trivial cubic equation to cancel out the terms of degree $3$.

\vskip.3cm
 {\bf Proof of \eqref{van52}.}
We have
\begin{align*}\Delta^{1}P_{[\varphi(\zeta)]}(\phi_{1,1},\phi_{2,2}) & =
\Delta^{1}P_{[\varphi(\zeta)]}(\lambda+a,\lambda+b) \\ &=
P^{\prime}_{[\varphi(\zeta)]}(\lambda) +
P^{\prime\prime}_{[\varphi(\zeta)]}(\lambda)\frac{a+b}{2!} +
P^{\prime\prime\prime}_{[\varphi(\zeta)]}(\lambda)\frac{a^2+
ab+b^2}{3!} + O(\zeta^3).
\end{align*}
Again, we can choose $b'(0)$ to cancel out the terms of degree $2$.

\vskip.3cm
 {\bf Proof of \eqref{van53}.}
\begin{align*}\Delta^{2}P_{[\varphi(\zeta)]}(\phi_{1,1},\phi_{2,2},\phi_{3,3}) 
&= \Delta^{2}P_{[\varphi(\zeta)]}(\lambda+a,\lambda+b,\lambda+c)
\\ &= P^{\prime\prime}_{[\varphi(\zeta)]}(\lambda)\frac{1}{2!} +
P^{\prime\prime\prime}_{[\varphi(\zeta)]}(\lambda)\frac{a+b+c}{3!} +
O(\zeta^2).
\end{align*}
Again, we can choose $c'(0)$ to cancel out the terms of degree $1$.

\vskip.3cm {\bf Case 2: $A_1$ as in \eqref{mjf52}.}

In this case, the conditions \ref{neccond} tell us that
\begin{equation}\label{condition52}
P_{[\varphi(\zeta)]}(\mu) = O(\zeta^2),
P^{\prime}_{[\varphi(\zeta)]}(\mu) = O(\zeta),
\end{equation} and \begin{equation}P_{[\varphi(\zeta)]}(\lambda) = O(\zeta^2),
P^{\prime}_{[\varphi(\zeta)]}(\lambda) = O(\zeta),
P^{\prime\prime}_{[\varphi(\zeta)]}(\lambda) =
O(\zeta)\end{equation}

We need to prove that
\begin{equation}
\label{van521} P_{[\varphi(\zeta)]}(\phi_{1,1})=O(\zeta^3),
\end{equation}
\begin{equation}
\label{van522}
\Delta^{1}P_{[\varphi(\zeta)]}(\phi_{1,1},\phi_{2,2})=O(\zeta^2),
\end{equation}
\begin{equation}
\label{van523}
\Delta^{2}P_{[\varphi(\zeta)]}(\phi_{1,1},\phi_{2,2},\phi_{3,3})=O(\zeta^2),
\end{equation}\begin{equation}
\label{van524}
\Delta^{3}P_{[\varphi(\zeta)]}(\phi_{1,1},\phi_{2,2},\phi_{3,3},\phi_{4,4})=O(\zeta).
\end{equation}

\vskip.3cm
 {\bf Proof of \eqref{van521}.} We have

 \begin{align*} P_{[\varphi(\zeta)]}(\phi_{1,1}) & = \Delta^0
 P_{[\varphi(\zeta)]}(\mu + a) \\ & = P_{[\varphi(\zeta)]}(\mu) +
 P'_{[\varphi(\zeta)]}(\mu)a +
 P''_{[\varphi(\zeta)]}(\mu)\frac{a^2}{2!}+O(\zeta^3).
 \end{align*}

 From the conditions \eqref{condition52}, we find that
 $$\mathrm{ord}_{\zeta=0}\left\{P_{[\varphi(\zeta)]}(\mu) +
 P'_{[\varphi(\zeta)]}(\mu)a\right\}\geq 2 $$ and
 $P''_{[\varphi(\zeta)]}(\mu)\neq 0$ at $\zeta=0,$ so we can choose
 $a'(0)$ as a solution of a non-trivial quadratic equation to cancel out the terms of degree $2$.

 \vskip.3cm
 {\bf Proof of \eqref{van522}.}

 \begin{align*}
\Delta^{1}P_{[\varphi(\zeta)]}(\phi_{1,1},\phi_{2,2}) & =
\Delta^{1}P_{[\varphi(\zeta)]}(\mu+a,\mu+b) \\ & =
 P'_{[\varphi(\zeta)]}(\mu) +
 P''_{[\varphi(\zeta)]}(\mu)\frac{a+b}{2!}+O(\zeta^2).
 \end{align*}
Again, we can choose $b'(0)$ to cancel out the terms of degree $1$.

\vskip.3cm
 {\bf Proof of \eqref{van523}.} This follows automatically from
 \eqref{van521}, \eqref{van522} and Lemma
 \ref{lemma_ord}(b).

\vskip.3cm
 {\bf Proof of \eqref{van524}.}  As remarked after formula \eqref{mjf_general},
 $\Delta^3
 P_{[\varphi(\zeta)]}(\phi_{1,1},\phi_{2,2},\phi_{3,3},\phi_{4,4})$ always
 vanishes at $\zeta=0$.

\section{Counter-examples for the candidate lifting when $n\geq 6$}
\label{cex}

Suppose that $\Phi(\zeta)$ is given by the formula stated in Proposition
\ref{genform}, and that $\Phi(0)=A_1$, with $A_1$ as
described in Lemma \ref{mjf}.  In particular we should have $\phi_{n,1}(\zeta)=O(\zeta),$
and we would then have
\begin{align}
\label{ordineq}\nonumber \ord_{\zeta=0}
(P_{[\varphi(\zeta)]}(\phi_{1,1})) & \geq
\ord_{\zeta=0}{\phi_{n,1}}+ \ord_{\zeta=0}(f_2f_3\ldots f_n)
\\& \geq 1 + d_{m_1}(B_1)-1 + \ord_{\zeta=0} (f_{m_1+1}
f_{m_1+2}\ldots f_n)
\\
\nonumber
& \geq d_{m_1}(B_1) + \ord_{\zeta
=0} (f_{m_1+1}f_{m_1+2}\ldots f_n).
\end{align}

To look for a counter-example, we look for a situation where the
above inequality does not occur.

Consider the  following matrix of size $k$
$$
B^{\lambda}_k = \begin{pmatrix} \lambda & 0 &&&& \\ & \lambda & 0
&&&
\\ &&\ddots&\ddots&&\\ &&&\lambda&0& \\&&&&\lambda&1 \\ &&&&&\lambda
\end{pmatrix}$$ with $\lambda\in\mathbb{D}.$ As usual, we understand
that there are $0$'s in the places where no entry is indicated. Put
$$ B^{\lambda}_k(\zeta) = \begin{pmatrix} \lambda & \zeta &&&& \\ & \lambda &
\zeta &&&
\\ &&\ddots&\ddots&&\\ &&&\lambda&\zeta& \\&&&&\lambda&1 \\ \zeta&&&&&\lambda
\end{pmatrix}$$
with $\zeta \in\mathbb{D}.$ Its characteristic polynomial is given
by the formula
$$
P_{B^{\lambda}_k(\zeta)}(t) = \det(tI_n-B^{\lambda}(\zeta)) = (t-\lambda)^k -\zeta^{k-1}.
$$

Decompose $n = k+l$ with $k,l \geq 3.$

Consider the matrix $$B = \begin{pmatrix}   B^{\lambda_1}_k & \\ &
B^{\lambda_2}_l
\end{pmatrix}$$ with $\lambda_1 \neq \lambda_2\in \mathbb{D}.$  Put
$$B(\zeta) = \begin{pmatrix}   B^{\lambda_1}_k(\zeta) & \\ &
B^{\lambda_2}_l(\zeta)
\end{pmatrix}.$$
Setting $\varphi(\zeta) = (\pi\circ B)(\zeta)$, we have
$$
P_{[\varphi(\zeta)]}(t) = P_{B(\zeta)}(t) =
P_{B_k^{\lambda_1}(\zeta)}(t)P_{B_l^{\lambda_2}(\zeta)}(t).
$$
Suppose now that we can find a lifting $\Phi(\zeta)$ in the form stated
in Proposition \ref{genform}. Inequality \eqref{ordineq} gives the estimate
$$\ord_{\zeta=0}P_{[\varphi(\zeta)]}(\phi_{1,1}) \geq  1+ (k -2) +(l-2) =k+l-3\geq k.$$

Since $P_{B^{\lambda_2}_l}(\zeta)(\phi_{1,1})\neq 0$ at $\zeta=0,$
\begin{align*} \ord_{\zeta=0}P_{[\varphi(\zeta)]}(\phi_{1,1})&
= \ord_{\zeta=0} P_{B_k^{\lambda_1}(\zeta)}(\phi_{1,1}) \\ & =
\ord_{\zeta=0} \left\{(\phi_{1,1}-\lambda_1)^k -\zeta^{k-1}\right\} \\
& = k-1. \end{align*}

So this gives a contradiction. One will object that the range of
$\varphi$ may fail to be  in $\mathbb{G}_n.$ However, $\varphi(0)\in
\mathbb{G}_n,$ so we can always reparametrize $\varphi$ by replacing
$\zeta$ by $\varepsilon \zeta$ with $\varepsilon$ small enough.


\vskip.5cm
{\bf Acknowledgements.} This work was made possible in part by the CNRS-BAS research cooperation
project no. 97946, ``Holomorphic invariant of domains".  We also would like to thank the Polish
hosts of the workshop in Bedlewo where the
first two named authors started this in 2010; and the Laboratoire International
Associ\'e ``Formath Vietnam" (funded by CNRS) and the Vietnam Institute for Advanced Studies in
Mathematics in Hanoi, which helped the second and third named author collaborate on this topic.

We wish to thank the referee for a careful reading of a first version of this paper,
which helped streamline the exposition in many places.

\end{document}